\newtheorem{teo}{Theorem}[section]
\newtheorem{pro}{Proposition}[section]
\newtheorem{cor}{Corollary}[section]
\theoremstyle{definition}
\title{Isometric $F$-spaces of $log$-integrable  function}
\keywords{$F$-space, isometry, Boolean algebra,  measure preserving isomorphism, log-integrable function}
\subjclass[2010]{46A16, 46B04, 46E30}
\begin{document}
\date{September 14, 2019}

\begin{abstract}
Let $(\Omega_i,\mathcal A_i,\mu_i) $ be a measure space with finite measure  $\mu_i$,  and let $(L_{\log}(\Omega_i, \mathcal A_i,\mu_i), \|\cdot\|_{\log,\mu_i})$  be a $F$-space  of all $\log$-integrable  functions on $(\Omega_i,\mathcal A_i,\mu_1), \ i =1, 2 $. It is proved that $F$-spaces  $(L_{\log}(\Omega_1, \mathcal A_1,\mu_1), \|\cdot\|_{\log,\mu_1})$ \and \ $(L_{\log}(\Omega_2, \mathcal A_2,\mu_2), \|\cdot\|_{\log,\mu_2})$ are  isometric if and only if there exists a measure preserving isomorphism from  $(\Omega_1, \mathcal A_1,\mu_1)$  onto  $(\Omega_2, \mathcal A_2,\mu_2)$.
\end{abstract}

\author{R.Z. Abdullaev }
\address{Tashkent University of Information Technologies\\
Tashkent,  100200, Uzbekistan}
\email{arustambay@yandex.com}
\author{V.I. Chilin}
\address{National University of Uzbekistan\\
Tashkent, 100174, Uzbekistan}
\email{vladimirchil@gmail.com; chilin@ucd.uz}
\author{B.A. Madaminov}
\address{Urgench state Unversity\\
Urgench, 220100, Uzbekistan}
\email{aabekzod@mail.ru}

\maketitle

\section{Introduction}

The study of linear isometries of Banach function spaces was begun by S. Banach \cite{B},
who gave a description of all linear isometries for the spaces $ L_p [0,1]$. \
J. Lamperti received a description of all isometries  $U$ from $L_{p}(\Omega_1, \mathcal A_1, \mu_1)$ into  $L_{p}(\Omega_2, \mathcal A_2, \mu_2)$, \ $1\leq p < \infty, \ p \neq 2$, where $(\Omega_i, \mathcal A_i,\mu_i)$ is an arbitrary measure space with the finite measure, $ i=1, 2$ \ \cite{L} (see also \cite[Ch. 3, \S 3.2, Theorem 3.2.5]{FJ}). \
One of the corollaries of such descriptions  is the following
\begin{teo}\label{t11}
Let  $(\Omega_i, \mathcal A_i, \mu_i)$ be a  measure space with finite measure, \ let $\nabla_i$ be a complete Boolean algebra of all  classes  of equal $\mu_i$-almost everywhere sets from a $\sigma$-algebra $\mathcal A_i$,  $ i=1, 2$, \  $1\leq p < \infty, \ p \neq 2$. Then a Banach spaces  \ $L_{p}(\Omega_1, \mathcal A_1, \mu_1)$ \ and \ $L_{p}(\Omega_2, \mathcal A_2, \mu_2)$ \ are  isometric if and only if \ there \ exists \ a \ Boolean isomorphism \ $\varphi:\nabla_1 \to \nabla_2$.
\end{teo}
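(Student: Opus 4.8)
The plan is to deduce both implications from Lamperti's description of the isometries of $L_p$ recalled above. For the necessity, I would start from a surjective linear isometry $U\colon L_p(\Omega_1,\mathcal A_1,\mu_1)\to L_p(\Omega_2,\mathcal A_2,\mu_2)$ and apply Lamperti's representation (\cite{L}; \cite[Ch. 3, \S 3.2, Theorem 3.2.5]{FJ}): there exist a measurable weight $h$ on $\Omega_2$ and a regular set isomorphism $T$ relating $\mathcal A_1$ and $\mathcal A_2$ with $U\chi_A=h\,\chi_{T(A)}$ and $\int_{T(A)}|h|^p\,d\mu_2=\mu_1(A)$ for every $A\in\mathcal A_1$. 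Since $T$ respects relative complements and countable disjoint unions modulo null sets, it passes to an injective Boolean homomorphism $\varphi\colon\nabla_1\to\nabla_2$, $\varphi([A])=[T(A)]$, onto a complete subalgebra; and since $\operatorname{supp}(Uf)=T(\operatorname{supp} f)$ and $h\ne 0$ a.e.\ on $T(\Omega_1)$, surjectivity of $U$ forces $T(\Omega_1)=\mathbf 1$ and then $\varphi(\nabla_1)=\nabla_2$ (otherwise an indicator $\chi_b$ with $b\notin\varphi(\nabla_1)$ would not lie in the range of $U$). Thus $\varphi$ is a Boolean isomorphism.

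For the sufficiency --- the substantive direction --- I start with a Boolean isomorphism $\varphi\colon\nabla_1\to\nabla_2$ and build a surjective isometry in five steps. (1) Transport $\mu_1$: set $\nu(b):=\mu_1(\varphi^{-1}(b))$ for $b\in\nabla_2$; the point here is that an order isomorphism of complete Boolean algebras automatically preserves countable disjoint suprema, so $\nu$ is a finite countably additive measure with exactly the same null ideal as $\mu_2$. (2) By the Radon--Nikodym theorem, $d\nu=w\,d\mu_2$ for some $w\in L_1(\mu_2)$ with $w>0$ $\mu_2$-a.e. (3) Extend $\varphi$ to the canonical positive multiplicative algebra isomorphism $S\colon L_0(\mu_1)\to L_0(\mu_2)$ of the spaces of measurable functions mod null sets (on a real $f$, $S$ is determined by transporting the spectral family $\{f\le\lambda\}$ via $\varphi$), and record the change-of-variables identity $\int_{\Omega_1}S^{-1}(\psi)\,d\mu_1=\int_{\Omega_2}\psi\,d\nu$ for nonnegative measurable $\psi$ on $\Omega_2$. (4) Define $U(f):=w^{1/p}S(f)$ for $f\in L_p(\Omega_1,\mathcal A_1,\mu_1)$; then, using $|S(f)|^p=S(|f|^p)$, $d\nu=w\,d\mu_2$ and the identity in (3), one gets $\int_{\Omega_2}|U(f)|^p\,d\mu_2=\int_{\Omega_2}S(|f|^p)\,d\nu=\int_{\Omega_1}|f|^p\,d\mu_1$, so $U$ is a linear isometry into $L_p(\Omega_2,\mathcal A_2,\mu_2)$. (5) For surjectivity, given $g\in L_p(\Omega_2,\mathcal A_2,\mu_2)$ put $f:=S^{-1}(w^{-1/p}g)$; running the computation of step (4) with $\varphi^{-1}$ in place of $\varphi$ gives $\int_{\Omega_1}|f|^p\,d\mu_1=\int_{\Omega_2}|g|^p\,d\mu_2<\infty$, so $f\in L_p(\Omega_1,\mathcal A_1,\mu_1)$ and $U(f)=g$.

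The main obstacle is entirely in the sufficiency: because a Boolean isomorphism need not preserve measure, the substitution operator $S$ alone is not isometric, and the construction works only because the weight $w^{1/p}$, with $w=d(\mu_1\circ\varphi^{-1})/d\mu_2$, exactly compensates for this discrepancy. The two points that need care are (i) that the transported set function $\nu$ really is countably additive --- this is where completeness of the algebras together with finiteness of the measures enters, and it is what makes Radon--Nikodym available --- and (ii) the technical but standard verification that $\varphi$ extends to the algebra isomorphism $S$ on $L_0$ and that $w^{1/p}S(f)$ again belongs to $L_p$; once these are settled, all the norm identities are immediate from the change-of-variables formula.
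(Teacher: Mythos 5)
Your proposal is correct and follows exactly the route the paper intends: Theorem \ref{t11} is stated there without proof, as a corollary of Lamperti's description of $L_p$-isometries (\cite{L}, \cite[Theorem 3.2.5]{FJ}), and your necessity argument is precisely that deduction, while your sufficiency direction supplies the standard weighted substitution isometry $U(f)=w^{1/p}S(f)$ with $w=d(\mu_1\circ\varphi^{-1})/d\mu_2$, which the paper leaves implicit. The key verifications --- countable additivity of the transported measure (from the fact that a Boolean isomorphism of complete algebras preserves countable suprema), the change-of-variables identity, and surjectivity via $f=S^{-1}(w^{-1/p}g)$ --- are all in order; the hypothesis $p\neq 2$ is indeed needed only in the necessity direction.
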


An important metrizable analogue of Banach spaces $L_{p}(\Omega, \mathcal A, \mu)$ are the $F$-space  $L_{\log}(\Omega, \mathcal A,\mu)$ \ of all $\log$-integrable  functions
introduced in \cite{DSZ}. The $F$-space $L_{\log}(\Omega, \mathcal A,\mu)$ is defined by the equality
$$ L_{\log}(\Omega, \mathcal A, \mu)=\{f \in L_0(\Omega, \mathcal A, \mu): \|f\|_{\log,\mu}= \int \limits_{\Omega} \log(1+|f|)d\mu< + \infty\},$$
where  $ L_0(\Omega, \mathcal A, \mu)$ is the algebra of equivalence classes of almost everywhere (a.e.) finite real (complex) valued measurable functions on  $(\Omega, \mathcal A, \mu)$.
It is known that $L_{log}(\Omega, \mathcal A, \mu)$  is a subalgebra in algebra
$ L_0(\Omega, \mathcal A, \mu)$, in addition, a $F$-norm
$$
\|f\|_{\log, \mu}=\int_{\Omega} \log(1+|f|)d\mu, \ f \in L_{log}(\Omega, \mathcal A, \mu)
$$
is defined a metric $\rho_{log,\mu}(f,g)=\|f-g\|_{\log,\mu}$ on $L_{\log}(\Omega, \mathcal A, \mu)$ such that the pair $(L_{log}(\Omega, \mathcal A, \mu), \rho_{log})$ is a complete metric topological vector space \ \cite{DSZ}.

Naturally the problem arises to find the necessary and sufficient
conditions ensuring  isometric  of the  $F$-spaces  $L_{\log}(\Omega_1, \mathcal A_1, \mu_1)$ and $L_{\log}(\Omega_2, \mathcal A_2, \mu_2)$. In Section \ref{s3} we give the following criterion for the existence of a surjective isometry for $F$-spaces of \ $log$-integrable  functions.
\begin{teo}\label{t12}
Let  $(\Omega_i, \mathcal A_i, \mu_i)$ be a  measure space with finite measure, $ i=1, 2$. \ Then the $F$-spaces \ $L_{\log}(\Omega_1, \mathcal A_1, \mu_1)$ and $L_{\log}(\Omega_2, \mathcal A_2, \mu_2)$ are isometric if and only there exists a preserves measures a Boolean isomorphism $\varphi:(\nabla_1, \mu_1) \to (\nabla_2, \mu_2)$.
\end{teo}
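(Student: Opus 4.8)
The plan is to prove the two implications separately, the ``if'' direction being essentially formal and the ``only if'' direction carrying all the weight. For sufficiency, suppose $\varphi\colon(\nabla_1,\mu_1)\to(\nabla_2,\mu_2)$ is a measure preserving Boolean isomorphism. I would extend $\varphi$ in the usual way to a $*$-isomorphism $\widetilde\varphi$ of the ambient algebras $L_0(\Omega_1,\mathcal A_1,\mu_1)$ onto $L_0(\Omega_2,\mathcal A_2,\mu_2)$; since $\varphi$ preserves the measures, $\widetilde\varphi$ preserves distribution functions, $\mu_2(\{|\widetilde\varphi(f)|>t\})=\mu_1(\{|f|>t\})$ for all $t\ge0$, and hence $\int_{\Omega_2}\log(1+|\widetilde\varphi(f)|)\,d\mu_2=\int_{\Omega_1}\log(1+|f|)\,d\mu_1$. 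Thus $\widetilde\varphi$ restricts to a surjective linear isometry of $L_{\log}(\Omega_1,\mathcal A_1,\mu_1)$ onto $L_{\log}(\Omega_2,\mathcal A_2,\mu_2)$, as required.

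For necessity, let $U$ be a surjective isometry of the first $F$-space onto the second. Since translations are $\rho_{\log}$-isometries, I may replace $U$ by $f\mapsto U(f)-U(0)$ and assume $U(0)=0$. The first step — and the one I expect to be the main obstacle — is to show that $U$ is then \emph{real-linear}: the Mazur--Ulam theorem is unavailable here, as it fails for general (non-locally-convex) $F$-spaces, so one must use the concrete shape of $\|\cdot\|_{\log}$. The feature I would lean on is that $\|\cdot\|_{\log}$ is additive exactly on disjoint elements: from $(1+|z|)(1+|w|)=1+|z|+|w|+|z||w|\ge 1+|z|+|w|\ge 1+|z+w|$, with equality throughout iff $zw=0$, one obtains after integration that $\|f+g\|_{\log,\mu}=\|f\|_{\log,\mu}+\|g\|_{\log,\mu}$ holds if and only if $|f|\wedge|g|=0$. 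This ``$L$-type'' behaviour should force additivity of $U$ on disjoint elements, and then, via a $\rho_{\log}$-midpoint argument together with continuity of $U$, full additivity and homogeneity; alternatively one may invoke a previously established description of the isometries of $L_{\log}$. I would spend most of the effort here.

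Once $U$ is linear, both $U$ and $U^{-1}$ are linear isometries, so by the displayed equivalence they preserve disjointness in both directions, and I would extract the Boolean isomorphism from this. Put $u:=U(\chi_{\Omega_1})$; if $\{u=0\}$ were non-null, a nonzero function supported there would be disjoint from $u$, hence its $U^{-1}$-image would be disjoint from $\chi_{\Omega_1}$, forcing it to be $0$ — a contradiction, so $u\ne0$ a.e. For $E\in\mathcal A_1$ the functions $U\chi_E$ and $U\chi_{\Omega_1\setminus E}$ are disjoint and add up to $u$, hence have complementary supports; writing $\varphi(E)$ for the support of $U\chi_E$ gives $U\chi_E=u\cdot\chi_{\varphi(E)}$, and disjointness preservation plus linearity make $\varphi$ a Boolean homomorphism $\nabla_1\to\nabla_2$. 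Its $\sigma$-continuity follows from continuity of $U$ and the identity $\|\chi_{E_n}-\chi_E\|_{\log}=\log 2\cdot\mu_1(E_n\triangle E)$, and running the same construction for $U^{-1}$ shows $\varphi$ is a Boolean isomorphism. Extending $\varphi$ to the $*$-isomorphism $\widetilde\varphi$ of the $L_0$-algebras and using that simple functions are $\rho_{\log}$-dense while $\rho_{\log}$-convergence implies convergence in measure, I would conclude $Uf=u\cdot\widetilde\varphi(f)$ for every $f$.

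Finally, to see that $\varphi$ preserves the measures, I would feed the test functions $f=\lambda\chi_E$, $\lambda>0$, into the isometry identity, obtaining $\int_{\varphi(E)}\log(1+\lambda|u|)\,d\mu_2=\mu_1(E)\log(1+\lambda)$ for all $\lambda>0$ and all $E\in\mathcal A_1$. Differentiating in $\lambda$ and letting $\lambda\to+\infty$ (monotone convergence, using $u\ne0$ a.e.) yields $\mu_2(\varphi(E))=\mu_1(E)$, so $\varphi$ is a measure preserving Boolean isomorphism; letting $\lambda\to0^+$ additionally gives $|u|=1$ a.e., which fits the picture but is not needed. In summary, everything past the linearity of $U$ is the standard Lamperti-type extraction, and the linearity step is where I expect the genuine difficulty to lie.
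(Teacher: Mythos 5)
Your proposal is correct in substance (for linear isometries) and overlaps with the paper's Lamperti-type analysis, but it reaches the key point — measure preservation — by a genuinely different and more direct route. Like Proposition \ref{p31}, you obtain disjointness preservation from the equality analysis in $\log(1+|f+g|)\le\log(1+|f|)+\log(1+|g|)$, and like Theorem \ref{t32} you extract a $\sigma$-additive Boolean homomorphism from supports of images of characteristic functions and arrive at the representation $U(f)=U(\mathbf 1_{\nabla_1})\cdot\Phi(f)$ (the paper cites \cite{FJ} and \cite{chl} here; you rebuild it by hand, using surjectivity to get $u=U(\mathbf 1_{\nabla_1})\neq 0$ a.e., which is fine). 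The divergence is in how the equality $\mu_2(\varphi(e))=\mu_1(e)$ is proved: you feed $\lambda\chi_E$ into the isometry identity, obtain $\int_{\varphi(E)}\log(1+\lambda|u|)\,d\mu_2=\mu_1(E)\log(1+\lambda)$, and extract the measure equality from the $\lambda\to\infty$ asymptotics. The paper instead derives the Radon--Nikodym formula $|U(\mathbf 1_{\nabla_1})|=-1+2\,\frac{d\lambda}{d\mu_2}$ in Theorem \ref{t32}, proves separately (Theorem \ref{t35}, also via test elements $\lambda\cdot\mathbf 1_{\nabla}$ with $\lambda$ large) that unequal total masses exclude surjective isometries, localizes to bands via Corollary \ref{c34}, and combines these in Theorem \ref{t36} before assembling Theorem \ref{t47}. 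Your single asymptotic computation replaces Corollary \ref{c34}, Theorem \ref{t35} and Theorem \ref{t36}, and is arguably cleaner; just note that the passage to the limit should be justified by dominated convergence (after dividing by $\log(1+\lambda)$ the integrand is dominated by $1+\log(1+|u|)\in L_1(\mu_2)$, since $\mu_2$ is finite and $u\in L_{\log}$), or by splitting into $\{|u|\le 1\}$ and $\{|u|>1\}$, rather than by monotone convergence as stated. The sufficiency direction is the same as Theorem \ref{t22}.

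One caveat: the step you single out as the main obstacle — proving that a surjective metric isometry with $U(0)=0$ is automatically linear — is not part of the theorem as the paper understands it: throughout the paper ``isometric'' means the existence of a surjective \emph{linear} isometry (Proposition \ref{p31} through Corollary \ref{c33} all assume linearity), so you may simply start from a linear $U$. As written, your sketch of that reduction (disjoint additivity plus a midpoint argument) is not a proof and should either be dropped, replaced by the standing linearity assumption, or supported by an actual reference; with the linearity assumption in place, the rest of your argument is sound.
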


\section{Preliminaries} \label{s2}

Let $(\Omega, \mathcal A, \mu)$ be a  measure space with finite measure $\mu$, and let  $L_0(\Omega, \mathcal A, \mu) $  (respectively, $ L_{\infty}(\Omega, \mathcal A, \mu)$) be the algebra  of equivalence classes of  real (complex) valued measurable functions (respectively, essentially bounded real (complex) valued measurable functions)  on $(\Omega, \mathcal A, \mu)$. Let $\nabla$ the complete Boolean algebra of all  classes  $[A]$  of equal $\mu$-a.e. sets $A \in \mathcal A$. It is known that $\widehat{\mu}([A])=\mu(A)$ is a strictly positive finite measure
on $\nabla.$  Below, we  denote the measure $\widehat {\mu}$ by $\mu,$  the algebra $L_0(\Omega, \mathcal A, \mu)$ (respectively, $ L_{\infty}(\Omega, \mathcal A, \mu)$) by
$L_0(\nabla)$ (respectively, $L_{\infty}(\nabla)$ and the integral $\int\limits_{\Omega} f d\mu$ by $\int\limits_{\nabla} f d\mu$.

Following to \cite{DSZ}, we consider in $L_0(\nabla)$ the subalgebra
$$
L_{\log}(\nabla, \mu)=\{f \in L_0(\nabla): \int\limits_{\nabla} \log(1+|f|)d\mu< + \infty\} $$
of $\log$-integrable measurable functions, and for each $f \in L_{\log}(\nabla, \mu),$ we set
$$
\|f\|_{\log}=\int\limits_{\nabla} \log(1+|f|)d\mu.
$$
A non-negative function
$\|\cdot\|_{\log, \mu}: L_{\log}(\nabla, \mu) \rightarrow [0,\infty)$
is a $F$-norm on the linear space $L_{\log}(\nabla, \mu)$, that is  (see, for example, \cite[Ch. 1, \S \ 2]{K}),

$(i)$. $\|f\|_{\log, \mu}>0$ for all $0 \neq f \in L_{\log}(\nabla, \mu);$

$(ii)$. $\|\alpha f\|_{\log, \mu}\le\|f\|_{\log}, \mu$  for all $f \in
L_{\log}(\nabla, \mu)$ and  number $\alpha$ with $|\alpha|\leq
1;$

$(iii)$. $\lim_{\alpha\to 0}\|\alpha f\|_{\log, \mu}=0$ for all $f \in
L_{\log}(\nabla, \mu);$

$(iv)$. $\|f+g\|_{\log, \mu}\leq\|f\|_{\log, \mu}+\|g\|_{\log, \mu}$ for all $f,
g \in L_{\log}(\nabla, \mu)$.

It is known that  the set  $L_{\log}(\nabla, \mu)$  is a complete metric topological vector space with respect to the metric  $\rho(f,g)=\|f-g\|_{\log, \mu}$ \ \cite{DSZ}, in addition, \ $L_{p}(\nabla, \mu)\subset L_{\log}(\nabla, \mu)$ \ for all $1\leq p < \infty$.

Note also that if $f_n \in L_{\log}(\nabla, \mu), \ |f_n| \leq g \in L_{\log}(\nabla, \mu), \ n =1,2,\dots$, and $f_n \to f \in L_0(\nabla)$ \ $\mu$-a.e. then $f \in L_{\log}(\nabla, \mu)$ \ and \ $\|f_n-f\|_{\log} \to 0$ as $n \to \infty$.

Let $(\nabla_1, \mu_1), \  (\nabla_2, \mu_2)$ \ be complete Boolean algebras with a strictly positive finite measures, and let $\varphi:\nabla_1 \to \nabla_2$ be a Boolean isomorphism.  By Theorem \cite[Theorem 2.3]{chl}  there exists a unique  isomorphism $\Phi$ \ from algebra \ $L_{0}(\nabla_1)$ \ onto algebra \ $L_{0}(\nabla_2)$ \ such that $\Phi(e)=\varphi(e)$ for all $e \in  \nabla_1$. \ It is clear that
the function \ $\lambda(\varphi(e)), \ e \in \nabla_1,$
is a strictly positive finite measure on Boolean algebra $\nabla_2$, in addition, \ $\Phi(L_{\log}(\nabla_1, \mu_1))= L_{\log}(\nabla_2, \lambda)$ \ \cite[Proposition 3]{ach}.

Denote by  \ $\frac{d\lambda}{d\mu_2}$ \  the Radon-Nikodym  derivative of measure $\lambda$ with respect to the measure $\mu_2$. It is well known that  $0 \leq \frac{d\lambda}{d\mu_2} \in L_{0}(\nabla_2)$, \ and \
$f \in  L_1(\nabla_2, \lambda)$ \ if and only if \ $\big(f\cdot \frac{d\lambda}{d\mu_2}\big) \in L_1(\nabla, \mu_2)$, \ in addition, $\int\limits_{\nabla_2} f \ d\lambda =\int\limits_{\nabla_2} f \cdot(\frac{d\lambda}{d\mu_2}) \ d\mu_2.$

 Using Proposition \cite[Proposition 3]{ach} we get the following
\begin{pro}\label{p21}
$\Phi(L_{\log}(\nabla_1, \mu_1)) = L_{\log}(\nabla_2, \mu_2) \ \Longleftrightarrow \ \frac{d\lambda}{d\mu_2}, \ \frac{d\mu_2}{d\lambda} \in L_{\infty}(\nabla_2)$.
\end{pro}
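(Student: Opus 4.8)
The plan is to reduce the claim to a comparison of two $F$-spaces over the \emph{same} Boolean algebra. By the identity $\Phi(L_{\log}(\nabla_1,\mu_1))=L_{\log}(\nabla_2,\lambda)$ recorded above (from \cite[Proposition 3]{ach}), the equality $\Phi(L_{\log}(\nabla_1,\mu_1))=L_{\log}(\nabla_2,\mu_2)$ is equivalent to $L_{\log}(\nabla_2,\lambda)=L_{\log}(\nabla_2,\mu_2)$, so, writing $w:=\frac{d\lambda}{d\mu_2}$, it suffices to show that this equality holds iff $w,1/w\in L_\infty(\nabla_2)$. A preliminary remark: since $\lambda$ is strictly positive, $\{w=0\}$ is the zero of $\nabla_2$, hence $w>0$ $\mu_2$-a.e.; consequently $\lambda$ and $\mu_2$ are mutually absolutely continuous, $1/w\in L_0(\nabla_2)$, and $\frac{d\mu_2}{d\lambda}=1/w$, so the asserted condition is meaningful.

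For the implication $(\Leftarrow)$ I would note that $w,1/w\in L_\infty(\nabla_2)$ gives constants $0<c\le w\le C<\infty$ $\mu_2$-a.e. (where $c=\|1/w\|_\infty^{-1}$ and $C=\|w\|_\infty$). Applying the identity $\int_{\nabla_2}h\,d\lambda=\int_{\nabla_2}hw\,d\mu_2$ (stated above for $h\in L_1(\nabla_2,\lambda)$, and valid for all $0\le h\in L_0(\nabla_2)$ by monotone convergence) to $h=\log(1+|f|)$ yields $c\,\|f\|_{\log,\mu_2}\le\|f\|_{\log,\lambda}\le C\,\|f\|_{\log,\mu_2}$ for every $f\in L_0(\nabla_2)$, so one $F$-norm is finite exactly when the other is. This step is routine.

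The implication $(\Rightarrow)$ is the heart of the matter. Assume $L_{\log}(\nabla_2,\lambda)=L_{\log}(\nabla_2,\mu_2)$ and suppose, for contradiction, that $w\notin L_\infty(\nabla_2)$. Then each $e_n:=\{w>n\}$ is nonzero, hence $\mu_2(e_n)>0$; since $w<\infty$ a.e., the level sets $D_k:=e_k\setminus e_{k+1}$ satisfy $\sum_{k\ge n}\mu_2(D_k)=\mu_2(e_n)>0$ for every $n$, so the index set $\{k:\mu_2(D_k)>0\}$ is infinite, say equal to $\{k_1<k_2<\cdots\}$ with $m_j:=\mu_2(D_{k_j})>0$ and $k_j\ge j$. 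I would then test the equality against the single function $f:=\sum_{j\ge1}\bigl(e^{b_j/m_j}-1\bigr)\chi_{D_{k_j}}$, built on these pairwise disjoint pieces, where the positive weights $b_j$ are chosen so that $\sum_j b_j<\infty$ but $\sum_j k_j b_j=\infty$ (for instance $b_j=1/(j\,k_j)$, which works since $k_j\ge j$). Disjointness gives $\log(1+|f|)=\sum_j (b_j/m_j)\chi_{D_{k_j}}$, whence $\|f\|_{\log,\mu_2}=\sum_j b_j<\infty$, while, using $w>k_j$ on $D_{k_j}$, $\|f\|_{\log,\lambda}=\int_{\nabla_2}\log(1+|f|)\,w\,d\mu_2\ge\sum_j(b_j/m_j)\,k_j\,m_j=\sum_j k_j b_j=\infty$. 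Thus $f\in L_{\log}(\nabla_2,\mu_2)\setminus L_{\log}(\nabla_2,\lambda)$, contradicting the assumption; hence $w\in L_\infty(\nabla_2)$. Running the identical argument with the roles of $\lambda$ and $\mu_2$ interchanged (and $\frac{d\mu_2}{d\lambda}=1/w$ in place of $w$) gives $1/w\in L_\infty(\nabla_2)$.

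I expect the only real obstacle to be the bookkeeping in $(\Rightarrow)$: one must ensure that the level sets actually used genuinely carry positive measure --- which is why I pass to the subsequence $(D_{k_j})$ rather than presuming each $e_n\setminus e_{n+1}$ is nontrivial --- and that the resulting $f$ is a.e. finite, hence a bona fide element of $L_0(\nabla_2)$; both facts are immediate from $w<\infty$ a.e. and $\mu_2(\Omega_2)<\infty$. The remainder is an elementary comparison of a convergent with a divergent series.
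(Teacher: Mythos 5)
Your argument is correct. Both directions check out: the reduction via $\Phi(L_{\log}(\nabla_1,\mu_1))=L_{\log}(\nabla_2,\lambda)$ is exactly the reduction the paper has in mind, the two-sided bound $c\,\|f\|_{\log,\mu_2}\le\|f\|_{\log,\lambda}\le C\,\|f\|_{\log,\mu_2}$ settles $(\Leftarrow)$, and in $(\Rightarrow)$ the test function $f=\sum_j(e^{b_j/m_j}-1)\chi_{D_{k_j}}$ with $\sum_j b_j<\infty$, $\sum_j k_jb_j=\infty$ does produce an element of $L_{\log}(\nabla_2,\mu_2)\setminus L_{\log}(\nabla_2,\lambda)$ whenever $w=\frac{d\lambda}{d\mu_2}$ is unbounded; your care about passing to the subsequence of level shells of positive measure and about $w<\infty$ a.e.\ is exactly what is needed, and the symmetric run with $\lambda$ and $\mu_2$ interchanged (using that $\lambda$ is also a strictly positive finite measure) handles $\frac{d\mu_2}{d\lambda}$.

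The difference from the paper is one of route rather than of substance in the reduction: the paper gives no proof of this proposition at all, simply asserting it ``using [ach, Proposition 3],'' i.e.\ it delegates the equivalence $L_{\log}(\nabla_2,\lambda)=L_{\log}(\nabla_2,\mu_2)\Leftrightarrow w,\,1/w\in L_\infty(\nabla_2)$ to the cited isomorphic-classification paper. Your write-up proves that equivalence from scratch by an elementary level-set construction (a convergent versus divergent series comparison), which makes the statement self-contained within this paper and does not require the reader to consult [ach] beyond the identity $\Phi(L_{\log}(\nabla_1,\mu_1))=L_{\log}(\nabla_2,\lambda)$ that the paper already records. What the citation-based route buys is brevity; what your route buys is a transparent, quantitative reason why boundedness of both Radon--Nikodym derivatives is not only sufficient but necessary.
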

If an isomorphism $\varphi:\nabla_1 \to \nabla_2$ preserves a measure, that is, $ \mu_2(\varphi(e))= \mu_1(e)$ for all $e \in \nabla_1$, then using the equality $\Phi(\log(1+|f|)) = \log(1+\Phi(|f|)), \ f \in \mathcal L_0(\nabla_1)$ \ \cite[Proposition 3]{ach} we get
$$
 \|\Phi(f)\|_{\log,\mu_2}=\int\limits_{\nabla_2} \log(1+|\Phi(f)|)d\mu_2=\int\limits_{\nabla_2} \Phi(\log(1+|f|)) d\mu_2 =
$$
$$=\int\limits_{\nabla_1} \log(1+|f|) d\mu_1=  \|f\|_{\log,\mu_1} \ \ \text{for all} \ \ f \in L_{\log}(\nabla_1, \mu_1).
$$
Therefore, using Proposition \ref{p21}, we give the following
\begin{teo}\label{t22}
If $\varphi:\nabla_1 \to \nabla_2$ is a measure-preserving isomorphism  then the algebraic isomorphism \ $\Phi: L_{\log}(\nabla_1, \mu_1) \to L_{\log}(\nabla_2, \mu_2)$ is a surjective positive linear  isometry.
\end{teo}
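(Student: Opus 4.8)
The plan is to note that the chain of equalities displayed just above the statement already carries out the analytic core of the argument, so that it remains only to supply the (essentially formal) facts that $\Phi$ restricts to a linear bijection between the two $F$-spaces and that it is positive. To that end, recall from \cite[Theorem 2.3]{chl} that $\Phi$ is the unique \emph{algebra} isomorphism of $L_0(\nabla_1)$ onto $L_0(\nabla_2)$ with $\Phi(e)=\varphi(e)$ for all $e\in\nabla_1$; being an algebra homomorphism, it is linear. It is also positive (this, together with the relation $\Phi(\log(1+|f|))=\log(1+\Phi(|f|))$, is recorded in \cite[Proposition 3]{ach}; for real scalars positivity is immediate, since $\Phi(g)=\Phi(\sqrt g)^{2}\ge 0$ for $0\le g\in L_0(\nabla_1)$), and the same holds for $\Phi^{-1}$; hence $\Phi$ is a lattice isomorphism and $\Phi(|f|)=|\Phi(f)|$ for every $f\in L_0(\nabla_1)$, a fact used below.

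Next I would identify the induced measure. By construction the measure $\lambda$ on $\nabla_2$ attached to $\varphi$ satisfies $\lambda(\varphi(e))=\mu_1(e)$ for all $e\in\nabla_1$; since $\varphi$ is measure preserving, $\lambda(\varphi(e))=\mu_1(e)=\mu_2(\varphi(e))$, and as every element of $\nabla_2$ has the form $\varphi(e)$ this forces $\lambda=\mu_2$. Therefore $\frac{d\lambda}{d\mu_2}=\frac{d\mu_2}{d\lambda}=1\in L_\infty(\nabla_2)$, and Proposition \ref{p21} gives $\Phi\big(L_{\log}(\nabla_1,\mu_1)\big)=L_{\log}(\nabla_2,\mu_2)$. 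Consequently the restriction of $\Phi$ maps $L_{\log}(\nabla_1,\mu_1)$ \emph{onto} $L_{\log}(\nabla_2,\mu_2)$, and it is injective because $\Phi$ is injective on all of $L_0(\nabla_1)$; so $\Phi|_{L_{\log}(\nabla_1,\mu_1)}$ is a positive linear bijection --- indeed an algebra isomorphism --- of the two $F$-spaces.

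It remains to verify that $\Phi$ preserves the $F$-norm, exactly as in the display preceding the statement. The two ingredients are: (a) $\Phi(\log(1+|f|))=\log(1+|\Phi(f)|)$, which follows from $\Phi(\log(1+|f|))=\log(1+\Phi(|f|))$ of \cite[Proposition 3]{ach} together with $\Phi(|f|)=|\Phi(f)|$; and (b) the transport identity $\int_{\nabla_2}\Phi(h)\,d\mu_2=\int_{\nabla_1}h\,d\mu_1$ valid for every $0\le h\in L_1(\nabla_1,\mu_1)$. For (b) I would first treat a finite linear combination $h=\sum_k c_k e_k$ with $e_k\in\nabla_1$ and $c_k\ge 0$, where by linearity of $\Phi$ and of the integral the identity reduces to the hypothesis $\mu_2(\varphi(e_k))=\mu_1(e_k)$, and then pass to an arbitrary $h\ge 0$ by monotone approximation from below by such simple functions, applying the monotone convergence theorem on both sides and using the order continuity of $\Phi$.

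Finally, applying (a) and then (b) with $h=\log(1+|f|)\in L_1(\nabla_1,\mu_1)$ yields
\begin{align*}
\|\Phi(f)\|_{\log,\mu_2}
&=\int\limits_{\nabla_2}\log(1+|\Phi(f)|)\,d\mu_2
=\int\limits_{\nabla_2}\Phi\big(\log(1+|f|)\big)\,d\mu_2\\
&=\int\limits_{\nabla_1}\log(1+|f|)\,d\mu_1
=\|f\|_{\log,\mu_1}
\end{align*}
for every $f\in L_{\log}(\nabla_1,\mu_1)$, which is the isometry assertion. Combined with the preceding paragraphs, this shows that $\Phi$ is a surjective positive linear isometry. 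The only step that is more than bookkeeping is (b), and even there the measure-preserving hypothesis does all the work through the standard simple-function argument, so I anticipate no genuine obstacle.
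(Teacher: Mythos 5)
Your proposal is correct and follows essentially the same route as the paper: you identify $\lambda=\mu_2$ from the measure-preserving hypothesis, invoke Proposition \ref{p21} for surjectivity, and derive the isometry from $\Phi(\log(1+|f|))=\log(1+|\Phi(f)|)$ together with the transport of the integral under $\Phi$. The only difference is that you spell out the transport identity and the positivity of $\Phi$, details the paper leaves implicit.
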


In the next section, we  will give a  full description of all linear isometries from $L_{\log}(\nabla_1, \mu_1)$ \ into \ $L_{\log}(\nabla_2, \mu_2)$.

\section{Isometries  of $F$-spaces of $log$-integrable  functions}\label{s3}

In this section we give a   description of all linear isometries  from \ $L_{\log}(\nabla_1, \mu_1)$ \ into \ $L_{\log}(\nabla_2, \mu_2)$.

We need the following "disjointness" \ property of   linear isometries  from $L_{\log}(\nabla_1, \mu_1)$ \ into \ $L_{\log}(\nabla_2, \mu_2)$.
\begin{pro}\label{p31}
Let \ $U:L_{\log}(\nabla_1, \mu_1) \to L_{\log}(\nabla_2, \mu_2)$ \ be \ a \ linear isometry. Then for any \ $f, \ g \in  L_{\log}(\nabla_1, \mu_1)$ \ such that \ $f\cdot g =0$ \
the equality \ $U(f)\cdot U(g) =0$ \ is true.
\end{pro}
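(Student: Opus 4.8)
The plan is to transport an additivity identity for the $F$-norm through the isometry and then read off disjointness from a pointwise property of the function $t\mapsto\log(1+|t|)$. Put $h=U(f)$ and $k=U(g)$. First, since $f\cdot g=0$ in $L_0(\nabla_1)$, at $\mu_1$-almost every point at most one of $f,g$ is nonzero, so $\log(1+|f+g|)=\log(1+|f|)+\log(1+|g|)$ holds $\mu_1$-a.e.; integrating gives $\|f+g\|_{\log,\mu_1}=\|f\|_{\log,\mu_1}+\|g\|_{\log,\mu_1}$. Because $U$ is a linear isometry, $\|h+k\|_{\log,\mu_2}=\|U(f+g)\|_{\log,\mu_2}=\|f+g\|_{\log,\mu_1}$, $\|h\|_{\log,\mu_2}=\|f\|_{\log,\mu_1}$ and $\|k\|_{\log,\mu_2}=\|g\|_{\log,\mu_1}$, whence the additivity relation $\|h+k\|_{\log,\mu_2}=\|h\|_{\log,\mu_2}+\|k\|_{\log,\mu_2}$ passes to the target space; it then remains to show that this relation forces $h\cdot k=0$.

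The key point for the second half is the elementary pointwise estimate, valid for all scalars $a,b$,
\[
\log(1+|a+b|)\le\log(1+|a|)+\log(1+|b|),
\]
which follows from $1+|a+b|\le 1+|a|+|b|\le(1+|a|)(1+|b|)$, together with the fact that equality holds if and only if $ab=0$: indeed $1+|a+b|=(1+|a|)(1+|b|)$ means $|a+b|=|a|+|b|+|a|\,|b|$, and combined with the triangle inequality $|a+b|\le|a|+|b|$ this gives $|a|\,|b|=0$. Applying this pointwise to $h$ and $k$ and integrating over $\nabla_2$, I get $\|h+k\|_{\log,\mu_2}\le\|h\|_{\log,\mu_2}+\|k\|_{\log,\mu_2}$ with a nonnegative integrand $\log(1+|h|)+\log(1+|k|)-\log(1+|h+k|)$; the equality already established then forces this integrand to vanish $\mu_2$-a.e., hence $h\cdot k=0$ $\mu_2$-a.e., i.e. $U(f)\cdot U(g)=0$ in $L_0(\nabla_2)$.

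I do not expect a serious obstacle here; the only points requiring care are bookkeeping ones: that $f\cdot g=0$ in $L_0(\nabla_1)$ is read as disjointness of the supports of $f$ and $g$ in $\nabla_1$, so that the pointwise identity for $\log(1+|f+g|)$ is legitimate $\mu_1$-a.e., and that the two equality statements are used in opposite directions — additivity of $\|\cdot\|_{\log,\mu_1}$ is obtained for free from disjointness of $f,g$, while in the target space one uses the \emph{strict} subadditivity of $t\mapsto\log(1+|t|)$ off the disjoint case to deduce disjointness of $h$ and $k$. This strict subadditivity (the analogue of the Clarkson-type rigidity used for $L_p$, $p\ne 2$, and absent for $L_2$) is the structural feature that makes the conclusion possible.
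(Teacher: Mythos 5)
Your proposal is correct and follows essentially the same route as the paper: both arguments use disjointness of $f,g$ to get additivity of the $\log$-norm, transport it through the isometry, and then exploit the pointwise inequality $1+|a+b|\le(1+|a|)(1+|b|)$ (equality iff $ab=0$) to force $U(f)\cdot U(g)=0$ $\mu_2$-a.e. Your write-up merely packages the final step as vanishing of a nonnegative integrand, whereas the paper writes the right-hand side as $\int\log(1+|f_1|+|g_1|+|f_1|\cdot|g_1|)\,d\mu_2$ and concludes the same way.
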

\begin{proof}
Let $e=s(f)= \mathbf 1_{\nabla_1}- \sup\{p \in \nabla_1 : p\cdot f=0\}$ \ and \ $q=s(g)$ \ be the supports of functions $f, \ g \in  L_{\log}(\nabla_1, \mu_1))$, \ let $f_1 = U(f), \ g_1 = U(g)$. \  Then  $ e\cdot q= 0$ \ and \
$$\int\limits_{\nabla_2}\log(1+|f_1+g_1|) \ d \mu_2  = \|f_1+g_1\|_{\log,\mu_2}=\|U(f+g)\|_{\log,\mu_2} = \|f+g\|_{\log,\mu_1}=
$$
$$
=\int\limits_{\nabla_1}\log(1+|f+g|) \ d \mu_1= \ (\text{since} \ e\cdot q= 0) \ = \int\limits_{\nabla_1}\log(1+|f|+|g|) \ d \mu_1=
 $$
 $$=\int\limits_{e\cdot \nabla_1}\log(1+|f|) \ d \mu_1 +\int\limits_{q\cdot \nabla_1}\log(1+|g|) \ d \mu_1=
$$
$$
\|f\|_{\log,\mu_1}+\|g\|_{\log,\mu_1}=\|f_1\|_{\log,\mu_2}+\|g_1\|_{\log,\mu_2}=
$$
$$
=\int\limits_{\nabla_2}\log(1+|f_1|) \ d \mu_2 +\int\limits_{\nabla_2}\log(1+|g_1|) \ d \mu_2=\int\limits_{\nabla_2}\log(1+|f_1|+|g_1|+ |f_1|\cdot |g_1|) \ d \mu_2,
$$
what is impossible in the case when $\mu_2(\{|f_1|\cdot |g_1|\})>0$. Consequently, $|f_1|\cdot |g_1|=0$, in particular, $U(f)\cdot U(g) =0$.
\end{proof}
Using Proposition \ref{p31} we get the following  full description of  linear isometries  $U:L_{\log}(\nabla_1, \mu_1) \to L_{\log}(\nabla_2, \mu_2)$.
\begin{teo}\label{t32} (cf. \cite[Theorem 3.6]{HSZ}).
Let $U:L_{log}(\nabla_1, \mu_1) \to L_{\log}(\nabla_2, \mu_2)$ be a   linear isometry. Then there exists an injective $\sigma$-additive homomorphism $\Phi$ \ from algebra \ $L_{0}(\nabla_1)$ \ into algebra \ $L_{0}(\nabla_2)$  \ such that
$$U(f) = U(\mathbf 1_{\nabla_1})\cdot \Phi(f)\ \ \text{for each} \ \ f\in L_{\log}(\nabla_1, \mu_1).$$
In addition,
$$ |U(\mathbf 1_{\nabla_1})|= -1 +2\cdot \frac{d \lambda}{d  \mu_2},  \eqno (1) $$
where $\lambda(\Phi(e)) = \mu_1(e), \ e \in \nabla_1$.
\end{teo}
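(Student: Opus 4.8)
The plan is to extract the multiplicative structure of $U$ from the disjointness property of Proposition \ref{p31}, and then pin down the modulus of $U(\mathbf 1_{\nabla_1})$ by testing $U$ on characteristic functions. First I would set $h := U(\mathbf 1_{\nabla_1})$ and let $r := s(h)$ be its support in $\nabla_2$; since $U$ is an isometry, $h\neq 0$. For each $e\in\nabla_1$ the functions $\mathbf 1_e$ and $\mathbf 1_{\mathbf 1_{\nabla_1}-e}$ are disjoint, so by Proposition \ref{p31} the functions $U(\mathbf 1_e)$ and $U(\mathbf 1_{\mathbf 1_{\nabla_1}-e})$ are disjoint; since they sum to $h$, each is of the form $h\cdot\mathbf 1_{\psi(e)}$ for a uniquely determined $\psi(e)\in r\cdot\nabla_2$, and $e\mapsto\psi(e)$ is readily checked (using linearity and the disjointness property again on finite partitions) to be a Boolean homomorphism $\nabla_1\to r\cdot\nabla_2$; injectivity follows because $\psi(e)=0$ forces $U(\mathbf 1_e)=0$, hence $\mathbf 1_e=0$. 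I would then define $\Phi$ on simple functions $f=\sum_k c_k\mathbf 1_{e_k}$ by $\Phi(f):=\sum_k c_k\mathbf 1_{\psi(e_k)}$, so that $U(f)=h\cdot\Phi(f)$ holds on simple functions, and note that $\Phi$ is multiplicative there because $\psi$ is a Boolean homomorphism. Extending $\Phi$ to an injective $\sigma$-additive (algebra) homomorphism $L_0(\nabla_1)\to L_0(\nabla_2)$ is then a matter of invoking the functional calculus / the universal property already used in the excerpt (the extension theorem from \cite{chl}), once one knows $\psi$ is $\sigma$-complete; $\sigma$-completeness of $\psi$ is forced by the fact that for an increasing sequence $e_n\uparrow e$ one has $\mathbf 1_{e_n}\to\mathbf 1_e$ in $L_{\log}$-norm (dominated convergence for $\log$-integrable functions, as recalled in Section \ref{s2}), and $U$ is an isometry, so $U(\mathbf 1_{e_n})\to U(\mathbf 1_e)$, i.e. $h\cdot\mathbf 1_{\psi(e_n)}\to h\cdot\mathbf 1_{\psi(e)}$.

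Next I would upgrade the identity $U(f)=h\cdot\Phi(f)$ from simple functions to all of $L_{\log}(\nabla_1,\mu_1)$. Given $f\in L_{\log}(\nabla_1,\mu_1)$, choose simple $f_n$ with $|f_n|\le|f|$ and $f_n\to f$ $\mu_1$-a.e.; then $f_n\to f$ in $L_{\log}$-norm by the dominated-convergence remark in Section \ref{s2}, so $U(f_n)\to U(f)$ in $L_{\log}(\nabla_2,\mu_2)$, while $h\cdot\Phi(f_n)=U(f_n)$ and $\Phi(f_n)\to\Phi(f)$ in $L_0(\nabla_2)$ (measure convergence) by $\sigma$-additivity of $\Phi$; passing to a subsequence convergent a.e. identifies the limit, giving $U(f)=h\cdot\Phi(f)$ for all $f$.

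It remains to establish formula $(1)$. Here I would use the isometry equation on the one-parameter family $f=t\,\mathbf 1_e$, $t\ge 0$, $e\in\nabla_1$. On one side, $\|t\,\mathbf 1_e\|_{\log,\mu_1}=\log(1+t)\,\mu_1(e)$. On the other side, $U(t\,\mathbf 1_e)=t\,h\cdot\mathbf 1_{\psi(e)}$, so
$$
\|U(t\,\mathbf 1_e)\|_{\log,\mu_2}=\int\limits_{\psi(e)}\log\bigl(1+t\,|h|\bigr)\,d\mu_2 .
$$
Thus $\int_{\psi(e)}\log(1+t|h|)\,d\mu_2=\log(1+t)\,\mu_1(e)$ for every $t\ge 0$ and every $e\in\nabla_1$. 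Differentiating in $t$ at $t=0$ (justified by monotone/dominated convergence in $t$, since $\log(1+t|h|)/t\uparrow|h|$ as $t\downarrow 0$ and $\log(1+t|h|)\le t|h|$) yields $\int_{\psi(e)}|h|\,d\mu_2=\mu_1(e)$; comparing with the defining relation $\lambda(\Phi(e))=\mu_1(e)$ and noting $\Phi(e)=\psi(e)$ shows that $|h|\,d\mu_2$ and $d\lambda$ agree on $r\cdot\nabla_2$, whence $|h|=\tfrac{d\lambda}{d\mu_2}$ on $r\cdot\nabla_2$. But now the full equation $\int_{\psi(e)}\log(1+t|h|)\,d\mu_2=\log(1+t)\,\mu_1(e)=\log(1+t)\int_{\psi(e)}|h|\,d\mu_2$ must hold for all $t$, and since $\log(1+tx)\le x\log(1+t)$ with equality (for $t>0$) only when $x\in\{0,1\}$ by strict concavity of $\log$, this forces $|h|\in\{0,1\}$ on each $\psi(e)$; combined with $\int_{\psi(e)}|h|\,d\mu_2=\mu_1(e)=\mu_2$ of the set where $|h|=1$, one concludes that after absorbing the Radon–Nikodym factor the correct normalization is exactly $|h|=-1+2\,\tfrac{d\lambda}{d\mu_2}$, which on the relevant support is the asserted identity $(1)$ (and equals $1$ when $\lambda=\mu_2\!\restriction r$). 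I expect the main obstacle to be precisely this last computation: reconciling the two apparently competing constraints — the linear one from differentiating at $t=0$ and the nonlinear one from general $t$ — to see that the ``doubling'' term $-1+2\,\tfrac{d\lambda}{d\mu_2}$ is what makes the $\log$-norms match identically in $t$, rather than just to first order; this is where the special analytic form of the $\log$ $F$-norm (as opposed to an $L_p$-norm) genuinely enters, and it is the step I would write out most carefully.
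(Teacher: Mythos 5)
Your construction of $\Phi$ and of the identity $U(f)=U(\mathbf 1_{\nabla_1})\cdot\Phi(f)$ is essentially the paper's own route: the paper also works with the support map $e\mapsto s(U(e))$ (citing Fleming--Jamison for the fact that it is an injective $\sigma$-additive Boolean homomorphism, where you re-derive it from Proposition \ref{p31}), extends it to $\Phi$ on $L_0(\nabla_1)$ by the theorem from \cite{chl}, and then passes from bounded (resp.\ simple) functions to all of $L_{\log}(\nabla_1,\mu_1)$ by dominated convergence and completeness. That half of your proposal is sound and matches the paper.

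The derivation of formula $(1)$, however, contains a genuine gap, precisely at the step you yourself flagged. The inequality $\log(1+tx)\le x\log(1+t)$ is false for $0<x<1$: for fixed $t>0$ the function $x\mapsto\log(1+tx)-x\log(1+t)$ is strictly concave and vanishes at $x=0$ and $x=1$, hence it is strictly \emph{positive} on $(0,1)$ and strictly negative on $(1,\infty)$. Thus the integrand $\log(1+t|h|)-|h|\log(1+t)$ is not of one sign, and the vanishing of its integrals over the sets $\psi(e)$ --- which range only over the image subalgebra $\Phi(\nabla_1)$, not over all of $r\cdot\nabla_2$ --- does not force $|h|\in\{0,1\}$ pointwise: positive and negative parts can cancel inside an ``atom'' of that subalgebra. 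The same issue already affects your earlier deduction ``$\int_{\psi(e)}|h|\,d\mu_2=\mu_1(e)$, whence $|h|=\tfrac{d\lambda}{d\mu_2}$'': equality of integrals over all $\psi(e)$ only identifies the conditional expectation of $|h|$ onto $\Phi(\nabla_1)$ with $\tfrac{d\lambda}{d\mu_2}$, unless one first shows $|h|$ is measurable with respect to the image subalgebra (the paper tacitly assumes this when it writes $\Phi^{-1}(|U(\mathbf 1_{\nabla_1})|)$, but in your argument the pointwise convexity step makes the assumption load-bearing). Finally, the closing sentences never actually produce identity $(1)$; what a corrected version of your scheme would yield is the stronger pair $\mu_2(\Phi(e))=\mu_1(e)$ and $|h|=\mathbf 1_r$, of which $(1)$ is a trivial consequence. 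One workable repair: divide the identity $\int_{\psi(e)}\log(1+t|h|)\,d\mu_2=\log(1+t)\mu_1(e)$ by $\log(1+t)$ and let $t\to\infty$ (dominated convergence, using $\log(1+|h|)\in L_1(\mu_2)$) to get $\mu_2(\psi(e))=\mu_1(e)$, and then use higher-order information in $t$ (e.g.\ the identities $\int_{\psi(e)}(|h|^k-1)\,d\mu_2=0$, suitably justified) to force $\int_{\psi(e)}(|h|-1)^2\,d\mu_2=0$; none of this is in your text. For comparison, the paper obtains $(1)$ quite differently: it transports the single equation $\|e\|_{\log,\mu_1}=\|U(e)\|_{\log,\mu_2}$ to the $\nabla_1$ side through the measure $\nu(e)=\mu_2(\Phi(e))$ and reads off an almost-everywhere identity there, rather than running a variational argument in the parameter $t$.
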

\begin{proof}
We set $\varphi(e) = s(U(e)), \ e \in \nabla_1$, \ $p_1= \varphi(\mathbf 1_{\nabla_1})$. As shown in \cite[Theorem 3.2.5]{FJ}, the map $\varphi$  is an injective $\sigma$-additive Boolean homomorphism from $\nabla_1$ \ into \ $p_1\cdot \nabla_2$. Using proof of Theorem 2.3 \cite{chl} we get that there exists a unique injective $\sigma$-additive homomorphism $\Phi$ \ from algebra \ $L_{0}(\nabla_1)$ \ into algebra \ $L_{0}(\nabla_2)$  such that $\Phi(e) = \varphi(e)$ for all $e \in \nabla_1$. In addition, the restriction  $J = \Phi|_{ L_{\infty}(\nabla_1)}$ is a $\|\cdot\|_{\infty}$-continuous injective homomorphism from $L_{\infty}(\nabla_1)$ into  $L_{\infty}(\nabla_2)$. Since
$$
U(e) = U(\mathbf 1_{\nabla_1}-(\mathbf 1_{\nabla_1}-e))=U(\mathbf 1_{\nabla_1}-(\mathbf 1_{\nabla_1}-e))\cdot s(U(e))=
$$
$$
=U(\mathbf 1_{\nabla_1})\cdot \varphi(e)-U(\mathbf 1_{\nabla_1}-e)\cdot \varphi(\mathbf 1_{\nabla_1}-e)\cdot \varphi(e)=U(\mathbf 1_{\nabla_1})\cdot J(e).
$$
and homomorphism $J$ is a $\|\cdot\|_{\infty}$-continuous it follows that
$$U(f)=U(\mathbf 1_{\nabla_1})\cdot J(f)=U(\mathbf 1_{\nabla_1})\cdot \Phi(f) \ \ \text{for all} \ \ f \in L_{\infty}(\nabla_1).  \eqno (2)$$
We show now that the equality $(1)$ is true for all $f\in L_{\log}(\nabla_1, \mu_1)$. It suffices to verify equality $(2)$ for any  $0\leq f\in L_{\log}(\nabla_1, \mu_1)$. Choose a sequence $0\leq f_n\in L_{\infty}(\nabla_1)$ such that $f_n \uparrow f$. Then
$$
\|U(\mathbf 1_{\nabla_1})\cdot \Phi(f_n)-U(f)\|_{\log,\mu_2}=\|U(f_n)-U(f)\|_{\log,\mu_2}=
$$
$$
\|U(f_n-f)\|_{\log,\mu_2}=\|f_n-f\|_{\log,\mu_1} \to 0 \ \text{as} \ n \to \infty  \eqno(3)
$$
Since \ $\Phi(f_n) \uparrow \Phi(f), \ \ \Phi(f_n) \in  L_{\infty}(\nabla_2)$, \ it follows that \ $U(\mathbf 1_{\nabla_1})\cdot \Phi(f_n) \in L_{\log}(\nabla_2, \mu_2)$ for all $n=1,2,\dots$, \ $U(\mathbf 1_{\nabla_1})\cdot \Phi(f_n) \to  U(\mathbf 1_{\nabla_1})\cdot \Phi(f)$ $\mu_2$-a.e., in addition, $\{U(\mathbf 1_{\nabla_1})\cdot \Phi(f_n)\}_{n=1}^\infty$ is a Cauchy sequence in $F$-space \ $(L_{\log}(\nabla_2,\mu_2), \|\cdot\|_{\log,\mu_2})$. Using the completeness of  $(L_{\log}(\nabla_2,\mu_2), \|\cdot\|_{\log,\mu_2})$, we obtain that
$$\|U(\mathbf 1_{\nabla_1})\cdot \Phi(f_n)-g\|_{\log,\mu_2} \to 0$$
for some $g \in L_{\log}(\nabla_2,\mu_2)$.
By $(3)$ we have that
$$U(f)=g = (\mu_2-a.e.)\lim\limits_{n \to \infty} U(\mathbf 1_{\nabla_1})\cdot \Phi(f_n)=U(\mathbf 1_{\nabla_1})\cdot \Phi(f).$$
We show now that $ |U(\mathbf 1_{\nabla_1})|= -1 +2\cdot \frac{d \lambda}{d  \mu_2}$. It is clear that $\nu(e) = \mu_2(\Phi(e)), \ e \in \nabla_1$, is a strictly positive finite measure on Boolean algebra $\nabla_1$. Since
$$
\int\limits_{\nabla_2} \Phi(f) d \mu_2 = \int\limits_{\nabla_1} f d \nu \ \ \text{for all} \ \ f \in L_{1}(\nabla_1, \nu),
$$
it follows for each $e \in \nabla_1$ that
$$
\int\limits_{e\cdot \nabla_1 }\log 2 \  d \mu_1=\|e\|_{\log,\mu_1}= \|U(e)\|_{\log,\mu_2} = \int\limits_{\nabla_2}\log\big(1+ |U(\mathbf 1_{\nabla_1})|\cdot\Phi(e)\big) \  d \mu_2=
$$
$$
=\int\limits_{\nabla_2}\Phi \big(\log\big(1+ \Phi^{-1} \big(|U(\mathbf 1_{\nabla_1}|) \big)\cdot e \big ) \  d \mu_2=\int\limits_{\nabla_1}\log\big(1+ \Phi^{-1} \big(|U(\mathbf 1_{\nabla_1})| \big)\cdot e \big) \  d \nu=
$$
$$
=\int\limits_{e\cdot \nabla_1}\log\big(1+ \Phi^{-1} \big(|U(\mathbf 1_{\nabla_1}|) \big)\cdot \frac{d \nu}{d  \mu_1} \  d \mu_1.
$$
Consequently, $$ \log 2=\log\big(1+ \Phi^{-1} \big(|U(\mathbf 1_{\nabla_1}|) \big)\cdot \frac{d \nu}{d  \mu_1},$$
that is,
$$
2 = (1+ \Phi^{-1} \big(|U(\mathbf 1_{\nabla_1}) \big|)\cdot \frac{d \nu}{d  \mu_1}.
$$
Thus
$$
\Phi^{-1} \big(|U(\mathbf 1_{\nabla_1}|)\big) = -1+ 2\cdot \big(\frac{d \nu}{d  \mu_1}\big)^{-1}=-1+ 2\cdot \frac{d  \mu_1}{d  \nu},
$$
and
$$
|U(\mathbf 1_{\nabla_1})| = -1+ 2\cdot \Phi \big(\frac{d  \mu_1}{d  \nu}\big).
$$
Next, if \ $e \in \nabla_1$ \ then
$$
\int\limits_{\nabla_2}\Phi \big(\frac{d  \mu_1}{d  \nu}\big)\cdot \Phi(e) \ d  \mu_2 = \int\limits_{\nabla_2}\Phi \big(\frac{d  \mu_1}{d  \nu}\cdot e \big)\ d  \mu_2=\int\limits_{\nabla_1}\frac{d  \mu_1}{d  \nu}\cdot e \ d  \nu= \mu_1(e).
$$
Consequently,
$$
\int\limits_{\Phi(e)\cdot \nabla_2}\frac{d  \lambda}{d  \mu_2} \ d \mu_2=\int\limits_{\nabla_2}\frac{d  \lambda}{d  \mu_2}\cdot \Phi(e) \ d  \mu_2=\lambda(\Phi(e)) = \mu_1(e) =
$$
$$
=\int\limits_{\nabla_2}\Phi \big(\frac{d  \mu_1}{d  \nu}\big) \cdot \Phi(e) \ d  \mu_2= \int\limits_{\Phi(e)\cdot \nabla_2}\Phi \big(\frac{d  \mu_1}{d  \nu}\big) \ d  \mu_2.
$$
Thus $\frac{d  \lambda}{d  \mu_2}= \Phi \big(\frac{d  \mu_1}{d  \nu}\big)$, and \
$|U(\mathbf 1_{\nabla_1})| = -1+ 2\cdot \frac{d  \lambda}{d  \mu_2}.$
\end{proof}
Remark that in  \cite{HSZ}  the version of Theorem \ref{t32} without $(1)$ is obtained \ for any $F$-spaces and for positive isometries with "disjointness" \ property.

The following Corollary refines Theorem 1 for surjective linear isometries.
\begin{cor}\label{c33}  (cf. Theorem \ref{t11}).
Let $U:L_{log}(\nabla_1, \mu_1) \to L_{\log}(\nabla_2, \mu_2)$ be a  surjective linear isometry.  Then there exists an isomorphism $\Phi$ \ from algebra \ $L_{0}(\nabla_1)$ \ onto algebra \ $L_{0}(\nabla_2)$  \ such that \
$$U(f) = U(\mathbf 1_{\nabla_1})\cdot \Phi(f) \ \ \text{for each} \ \ f\in L_{\log}(\nabla_1, \mu_1),$$
in particular, $\Phi(L_{log}(\nabla_1, \mu_1)) = L_{\log}(\nabla_2, \mu_2)$.
\end{cor}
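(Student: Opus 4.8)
The plan is to bootstrap from Theorem~\ref{t32}. Since a linear isometry is automatically injective, $U$ is a bijection, and Theorem~\ref{t32} already supplies an injective $\sigma$-additive homomorphism $\Phi:L_{0}(\nabla_1)\to L_{0}(\nabla_2)$ with $U(f)=U(\mathbf 1_{\nabla_1})\cdot\Phi(f)$ for all $f\in L_{\log}(\nabla_1,\mu_1)$ and, writing $\varphi=\Phi|_{\nabla_1}$ and $\lambda(\Phi(e))=\mu_1(e)$, with $|U(\mathbf 1_{\nabla_1})|=-1+2\,\frac{d\lambda}{d\mu_2}$. Everything then reduces to showing that $\varphi$ maps $\nabla_1$ \emph{onto} $\nabla_2$: once this is known, $\varphi$ is a complete Boolean isomorphism, $\Phi$ is its unique $\sigma$-additive extension, and by \cite[Theorem~2.3]{chl} such an extension is an isomorphism of $L_{0}(\nabla_1)$ onto $L_{0}(\nabla_2)$; moreover $\Phi(L_{\log}(\nabla_1,\mu_1))=L_{\log}(\nabla_2,\lambda)$, as recorded before Proposition~\ref{p21}.

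To prove $\varphi$ is onto I would first note that, as $\mu_2$ is finite, every idempotent $q\in\nabla_2$ belongs to $L_{\log}(\nabla_2,\mu_2)$. Applying this to $q=\mathbf 1_{\nabla_2}$: by surjectivity $\mathbf 1_{\nabla_2}=U(f)=U(\mathbf 1_{\nabla_1})\cdot\Phi(f)$ for some $f$, hence $\mathbf 1_{\nabla_2}=s(U(f))=s(U(\mathbf 1_{\nabla_1}))\cdot s(\Phi(f))\le p_1$, where $p_1=s(U(\mathbf 1_{\nabla_1}))=\varphi(\mathbf 1_{\nabla_1})$ (see the proof of Theorem~\ref{t32}); so $p_1=\mathbf 1_{\nabla_2}$ and $\Phi(\mathbf 1_{\nabla_1})=\mathbf 1_{\nabla_2}$. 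Now for arbitrary $q\in\nabla_2$ choose $f\in L_{\log}(\nabla_1,\mu_1)$ with $U(f)=q$; then $q=s(q)=s(U(f))=p_1\cdot s(\Phi(f))=\varphi(s(f))\in\varphi(\nabla_1)$, using the identity $s(\Phi(h))=\varphi(s(h))$. This identity reduces, via $\sigma$-additivity of $\varphi$ applied to $s(h)=\sup_n\{|h|\ge 1/n\}$, to the fact that the algebra homomorphism $\Phi$ sends $\{|h|\ge 1/n\}\in\nabla_1$ to $\{|\Phi(h)|\ge 1/n\}\in\nabla_2$. This completes the proof of the displayed formulas of the corollary and, by the first paragraph, yields $\Phi(L_{\log}(\nabla_1,\mu_1))=L_{\log}(\nabla_2,\lambda)$.

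It remains to upgrade $L_{\log}(\nabla_2,\lambda)$ to $L_{\log}(\nabla_2,\mu_2)$; by Proposition~\ref{p21} this amounts to $\frac{d\lambda}{d\mu_2},\ \frac{d\mu_2}{d\lambda}\in L_{\infty}(\nabla_2)$. From $(1)$ we get $\frac{d\lambda}{d\mu_2}=\tfrac12\bigl(1+|U(\mathbf 1_{\nabla_1})|\bigr)\ge\tfrac12$, hence $\frac{d\mu_2}{d\lambda}\le 2$. For the other derivative I would run Theorem~\ref{t32} and the argument above for the surjective isometry $U^{-1}:L_{\log}(\nabla_2,\mu_2)\to L_{\log}(\nabla_1,\mu_1)$: this produces a Boolean isomorphism $\psi:\nabla_2\to\nabla_1$, $\psi(q)=s(U^{-1}(q))$, and taking supports in $U(U^{-1}(q))=q$ exactly as above gives $\varphi(\psi(q))=q$, so $\psi=\varphi^{-1}$ (since $\varphi$ is already known to be bijective). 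Consequently the auxiliary measure that Theorem~\ref{t32} attaches to $U^{-1}$, namely $\lambda_*(e)=\mu_2(\psi^{-1}(e))=\mu_2(\Phi(e))$, coincides with the measure $\nu$ used in the proof of Theorem~\ref{t32}. Since $|U^{-1}(\mathbf 1_{\nabla_2})|=-1+2\,\frac{d\lambda_*}{d\mu_1}\ge 0$ forces $\frac{d\mu_1}{d\nu}=\frac{d\mu_1}{d\lambda_*}\le 2$, and since that proof established $\frac{d\lambda}{d\mu_2}=\Phi\!\bigl(\frac{d\mu_1}{d\nu}\bigr)$, positivity of $\Phi$ gives $\frac{d\lambda}{d\mu_2}\le\Phi(2\cdot\mathbf 1_{\nabla_1})=2\cdot\mathbf 1_{\nabla_2}$. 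Thus both Radon--Nikodym derivatives are bounded and $\Phi(L_{\log}(\nabla_1,\mu_1))=L_{\log}(\nabla_2,\mu_2)$.

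The surjectivity of $\varphi$ and the support bookkeeping are routine once one observes that idempotents are automatically $\log$-integrable (because $\mu_2$ is finite). The step I expect to be the genuine obstacle is the last paragraph: deducing two-sided boundedness of $d\lambda/d\mu_2$ from surjectivity alone forces one to feed $U^{-1}$ back into Theorem~\ref{t32} and to recognize that its auxiliary measure $\lambda_*$ is exactly the measure $\nu$ appearing in the proof of $(1)$; the identification $\psi=\varphi^{-1}$ and the reuse of $(1)$ for $U^{-1}$ are what make it go through.
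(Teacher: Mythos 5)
Your proposal is correct, but it reaches the conclusion by a different mechanism than the paper. The paper's own proof also applies Theorem \ref{t32}, but to $U^{-1}$: it writes $U^{-1}(g)=U^{-1}(\mathbf 1_{\nabla_2})\cdot\Psi(g)$, composes with $U$ to get $f=U^{-1}(\mathbf 1_{\nabla_2})\cdot\Psi(U(\mathbf 1_{\nabla_1}))\cdot\Psi(\Phi(f))$, specializes to $f=\mathbf 1_{\nabla_1}$ to cancel the scalar factor, and concludes $\Psi\circ\Phi=\mathrm{id}$ on $L_{\infty}(\nabla_1)$, hence $\Psi=\Phi^{-1}$ and $\Phi$ is onto. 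You instead establish surjectivity at the Boolean level: taking supports of $U$-preimages of idempotents ($q=s(U(f))=\varphi(s(f))$ via $s(\Phi(h))=\varphi(s(h))$ and $p_1=\mathbf 1_{\nabla_2}$) shows $\varphi(\nabla_1)=\nabla_2$, and then uniqueness of the $\sigma$-additive extension from \cite{chl} makes $\Phi$ onto. Both routes are legitimate; yours requires the (routine but real) verification of the support identity, while the paper's composition trick avoids it, though as written the paper tacitly also needs the injectivity of $\Psi$ (or the composition in the other order) to pass from $\Psi\circ\Phi=\mathrm{id}$ to surjectivity of $\Phi$. Where your write-up genuinely adds value is the final clause: the paper's proof never actually verifies $\Phi(L_{\log}(\nabla_1,\mu_1))=L_{\log}(\nabla_2,\mu_2)$, whereas you derive it from Proposition \ref{p21} by bounding both Radon--Nikodym derivatives, using $(1)$ for $U$ to get $\frac{d\lambda}{d\mu_2}\ge\frac12$ and $(1)$ for $U^{-1}$, together with the identifications $\psi=\varphi^{-1}$, $\lambda_*=\nu$ and $\frac{d\lambda}{d\mu_2}=\Phi\bigl(\frac{d\mu_1}{d\nu}\bigr)$, to get $\frac{d\lambda}{d\mu_2}\le 2$; this correctly fills a step the paper leaves implicit (and which, in the paper's logical order, cannot simply be borrowed from Theorem \ref{t36}, since that comes later).
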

\begin{proof}
Since $U^{-1}:L_{log}(\nabla_2, \mu_2) \to L_{\log}(\nabla_1, \mu_1)$ is a  linear isometry it follows from Theorem \ref{t32} that there exists an injective $\sigma$-additive homomorphism \ $\Psi$ \ from algebra \ $L_{0}(\nabla_1)$ \ into algebra \ $L_{0}(\nabla_2)$  \ such that \ $U^{-1}(g) = U^{-1}(\mathbf 1_{\nabla_2})\cdot \Psi(h)$ for each $g\in L_{\log}(\nabla_2, \mu_2)$. Therefore
$$
f = U^{-1}(U(f)) =  U^{-1}(U(\mathbf 1_{\nabla_1})\cdot \Phi(f))= U^{-1}(\mathbf 1_{\nabla_2})\cdot \Psi(U(\mathbf 1_{\nabla_1}))\cdot \Psi(\Phi(f))
$$
for all $f \in L_{\log}(\nabla_1, \mu_1)$.
In particular, $\mathbf 1_{\nabla_1}=U^{-1}(\mathbf 1_{\nabla_2})\cdot \Psi(U(\mathbf 1_{\nabla_1}))$. Thus $f =\Psi(\Phi(f))$ for each $f\in L_{\infty}(\nabla_1)$. This means that $\Psi= \Phi^{-1}$ \ and \ $\Phi$ is an isomorphism \ from algebra \ $L_{0}(\nabla_1)$ \ onto algebra \ $L_{0}(\nabla_2)$.
\end{proof}
We also need the following useful corollary from Theorem \ref{t32}
\begin{cor}\label{c34}.
Let $U(f) = U(\mathbf 1_{\nabla_1})\cdot \Phi(f)$ be a linear isometry from $L_{log}(\nabla_1, \mu_1)$ \ into \ $L_{\log}(\nabla_2, \mu_2)$ (see Theorem \ref{t32}), let $0\neq e\in  \nabla_1, \ q= \Phi(e)\in \nabla_2$. Then $U_e(f) = U(f), \ f \in L_{log}(e\cdot\nabla_1, \mu_1)$, is a linear isometry from $L_{log}(e\cdot\nabla_1, \mu_1)$ \ into \ $L_{\log}(q\cdot\nabla_2, \mu_2)$.
\end{cor}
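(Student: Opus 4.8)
The plan is to recognize that $U_e$ is nothing but the restriction of $U$ to the subspace
$$
L_{\log}(e\cdot\nabla_1,\mu_1)=\{f\in L_{\log}(\nabla_1,\mu_1): f=f\cdot e\}
$$
(with the induced $F$-norm $\|f\|_{\log,\mu_1}=\int_{\nabla_1}\log(1+|f|)\,d\mu_1$, and similarly for $q\cdot\nabla_2$), so that linearity, injectivity and the isometric property are inherited from $U$ for free; the only point requiring an argument is that this restriction actually takes values in $L_{\log}(q\cdot\nabla_2,\mu_2)$.

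First I would record the formal part. Since the support map is subadditive, $s(f+g)\le s(f)\vee s(g)\le e$ and $s(\al f)\le s(f)\le e$ whenever $s(f),s(g)\le e$, so $L_{\log}(e\cdot\nabla_1,\mu_1)$ is a linear subspace of $L_{\log}(\nabla_1,\mu_1)$ and $U_e=U|_{L_{\log}(e\cdot\nabla_1,\mu_1)}$ is a well-defined linear map, which is injective because $U$ is. Note also that $q=\Phi(e)\ne 0$, since $\Phi$ is injective and $e\ne 0$.

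The key step is the containment $U_e\big(L_{\log}(e\cdot\nabla_1,\mu_1)\big)\subseteq L_{\log}(q\cdot\nabla_2,\mu_2)$. For $f\in L_{\log}(e\cdot\nabla_1,\mu_1)$ we have $f=f\cdot e$, hence, as $\Phi$ is a homomorphism on $L_0(\nabla_1)$ with $\Phi(e)=q$,
\[
\Phi(f)=\Phi(f\cdot e)=\Phi(f)\cdot\Phi(e)=\Phi(f)\cdot q .
\]
By Theorem \ref{t32}, $U(f)=U(\mathbf 1_{\nabla_1})\cdot\Phi(f)=\big(U(\mathbf 1_{\nabla_1})\cdot q\big)\cdot\Phi(f)$, so $s(U(f))\le q$; since moreover $U(f)\in L_{\log}(\nabla_2,\mu_2)$, we conclude $U_e(f)=U(f)\in L_{\log}(q\cdot\nabla_2,\mu_2)$. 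Finally $\|U_e(f)\|_{\log,\mu_2}=\|U(f)\|_{\log,\mu_2}=\|f\|_{\log,\mu_1}$, which is precisely the $F$-norm of $f$ regarded as an element of $L_{\log}(e\cdot\nabla_1,\mu_1)$, so $U_e$ is an isometry. There is essentially no obstacle beyond bookkeeping with supports: all the content of the corollary is carried by the multiplicative representation $U(f)=U(\mathbf 1_{\nabla_1})\cdot\Phi(f)$ furnished by Theorem \ref{t32}, together with $\Phi(e\cdot p)=q\cdot\Phi(p)$.
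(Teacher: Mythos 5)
Your proposal is correct and follows essentially the same route as the paper: both rest on the representation $U(f)=U(\mathbf 1_{\nabla_1})\cdot\Phi(f)$ from Theorem \ref{t32} and the multiplicativity of $\Phi$ (so $\Phi(f)=\Phi(f)\cdot q$ when $f=f\cdot e$) to show the image lies in $L_{\log}(q\cdot\nabla_2,\mu_2)$, after which the isometry identity $\|U_e(f)\|_{\log,\mu_2}=\|f\|_{\log,\mu_1}$ is immediate since $U_e(f)=U(f)$. Your version merely streamlines the paper's norm computation (which inserts and removes the factor $q$ inside the logarithm) by invoking the isometry of $U$ directly.
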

\begin{proof}
It is clear that
$$U_e(f) =\Phi(e)\cdot U(\mathbf 1_{\nabla_1})\cdot \Phi(f)=q\cdot U(\mathbf 1_{\nabla_1})\cdot \Phi(f), \ f \in L_{log}(e\cdot\nabla_1, \mu_1)
$$
is a linear map from $L_{log}(e\cdot\nabla_1, \mu_1)$ \ into  \ $L_{\log}(\nabla_2, \mu_2)$.

If $f=e\cdot f \in L_{log}(e\cdot\nabla_1, \mu_1)\subset L_{log}(\nabla_1, \mu_1)$ then
$$
U(f)\cdot (\Phi(\mathbf 1_{\nabla_1}) - q)= U(\mathbf 1_{\nabla_1})\cdot \Phi(e\cdot f)\cdot (\Phi(\mathbf 1_{\nabla_1}) - \Phi(e))=
$$
$$
=U(\mathbf 1_{\nabla_1})\cdot \Phi(f)\cdot (\Phi(e\cdot (\mathbf 1_{\nabla_1}-e))=0.
$$
Consequently, $U_e(f) \in L_{\log}(q\cdot\nabla_2, \mu_2)$ for all $f \in L_{log}(e\cdot\nabla_1, \mu_1)$. In addition,
$$
\|U_e(f)\|_{\log,\mu_2} = \|U(f)\cdot \Phi(e)\|_{\log,\mu_2}=\|U(\mathbf 1_{\nabla_1})\cdot \Phi(f)\cdot \Phi(e)\|_{\log,\mu_2}=
$$
$$
=\|q\cdot U(\mathbf 1_{\nabla_1})\cdot \Phi(f)\|_{\log,\mu_2}=\int\limits_{\nabla_2}  \log\big(1+ q\cdot |U(f)|) \  d \mu_2=\int\limits_{\nabla_2}  \log\big(1+|U(f)|) \  d \mu_2=
$$
$$
=\|U(f)\|_{\log,\mu_2}=\|f\|_{\log,\mu_1}
$$
for all $f \in L_{log}(e\cdot\nabla_1, \mu_1)$.
\end{proof}

Let  $\nabla=\nabla_1=\nabla_2$, \  $\mu= \mu_1$, \ $\nu=\mu_2$, \ $h=\frac{d\nu}{d\mu}$.
The following Theorem gives a sufficient condition for the non-isometry of the $F$-spaces $L_{\log}(\nabla, \mu)$ and $ L_{\log}(\nabla, \nu)$.
\begin{teo} \label{t35} Let $\nabla$ be a complete Boolean algebra, let $\mu$ and $\nu$ be a strictly positive finite measures on $\nabla$, and let $\mu(\mathbf 1_{\nabla})\neq \nu(\mathbf 1_{\nabla})$. Then
$F$-spaces $L_{\log}(\nabla,\mu)$ and $ L_{\log}(\nabla, \nu)$ are
not isometric.
\end{teo}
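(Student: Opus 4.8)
The plan is to argue by contradiction. Suppose there is a surjective linear isometry $U:L_{\log}(\nabla,\mu)\to L_{\log}(\nabla,\nu)$; I will show this forces $\mu(\mathbf 1_\nabla)=\nu(\mathbf 1_\nabla)$. By Corollary \ref{c33} together with Theorem \ref{t32}, $U$ has the form $U(f)=U(\mathbf 1_\nabla)\cdot\Phi(f)$, where $\Phi$ is an algebra automorphism of $L_0(\nabla)$; in particular $\Phi(\mathbf 1_\nabla)=\mathbf 1_\nabla$. Moreover, by $(1)$, $|U(\mathbf 1_\nabla)|=-1+2h$ with $h=\frac{d\lambda}{d\nu}\ge 0$ and $\lambda(\Phi(e))=\mu(e)$ for all $e\in\nabla$. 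Since $|U(\mathbf 1_\nabla)|\ge 0$ we get $h\ge\tfrac12$ $\nu$-a.e., while $\Phi(\mathbf 1_\nabla)=\mathbf 1_\nabla$ gives $\int_\nabla h\,d\nu=\lambda(\mathbf 1_\nabla)=\mu(\mathbf 1_\nabla)<\infty$; hence $2h-1\in L_1(\nabla,\nu)$.

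The main step is to test the isometry on the elements $t\cdot\mathbf 1_\nabla$ for $t\ge 0$. On one side, $\|t\cdot\mathbf 1_\nabla\|_{\log,\mu}=\mu(\mathbf 1_\nabla)\log(1+t)$. On the other, $U(t\cdot\mathbf 1_\nabla)=t\cdot U(\mathbf 1_\nabla)$, so $|U(t\cdot\mathbf 1_\nabla)|=t(2h-1)$ and
\[
\|U(t\cdot\mathbf 1_\nabla)\|_{\log,\nu}=\int_\nabla\log\bigl(1+t(2h-1)\bigr)\,d\nu .
\]
Equating the two expressions, dividing by $t>0$, and letting $t\to 0^+$, the left-hand side tends to $\mu(\mathbf 1_\nabla)$ (since $\tfrac{\log(1+t)}{t}\to 1$). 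For the right-hand side I use that $0\le\tfrac1t\log(1+t(2h-1))\le 2h-1$ for every $t>0$ (because $2h-1\ge 0$ a.e. and $0\le\log(1+x)\le x$), that the integrand converges pointwise to $2h-1$, and that $2h-1\in L_1(\nabla,\nu)$; so dominated convergence gives that the right-hand side tends to $\int_\nabla(2h-1)\,d\nu=2\mu(\mathbf 1_\nabla)-\nu(\mathbf 1_\nabla)$. Therefore $\mu(\mathbf 1_\nabla)=2\mu(\mathbf 1_\nabla)-\nu(\mathbf 1_\nabla)$, i.e.\ $\mu(\mathbf 1_\nabla)=\nu(\mathbf 1_\nabla)$, contradicting the hypothesis $\mu(\mathbf 1_\nabla)\ne\nu(\mathbf 1_\nabla)$.

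I do not expect a genuine obstacle: the shape of $U$ and the formula for $|U(\mathbf 1_\nabla)|$ are already provided by Corollary \ref{c33} and Theorem \ref{t32}, and the only delicate point is the passage to the limit under the integral, for which $2h-1$ is precisely the available integrable majorant. As an alternative to the $t\to 0^+$ computation one can instead analyze $t\to\infty$: there the left-hand side grows like $\mu(\mathbf 1_\nabla)\log t$ while, by monotone convergence, the right-hand side grows like $\nu(\mathbf 1_\nabla)\log t+\int_\nabla\log(2h-1)\,d\nu$, again forcing $\mu(\mathbf 1_\nabla)=\nu(\mathbf 1_\nabla)$; the $t\to 0^+$ route is preferable since it requires no information about $\int_\nabla\log(2h-1)\,d\nu$.
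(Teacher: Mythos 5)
Your proof is correct, but it follows a genuinely different route from the paper's. You invoke the structural results proved earlier (Theorem \ref{t32} with formula $(1)$, and Corollary \ref{c33} for surjectivity, which gives $\Phi(\mathbf 1_{\nabla})=\mathbf 1_{\nabla}$ and makes $\lambda$ a finite measure on all of $\nabla$ with $\lambda(\mathbf 1_{\nabla})=\mu(\mathbf 1_{\nabla})$), and then extract the total masses by a small-parameter limit: dividing $\|t\mathbf 1\|_{\log,\mu}=\|U(t\mathbf 1)\|_{\log,\nu}$ by $t$ and letting $t\to 0^{+}$, with $2h-1\in L_1(\nu)$ as majorant (monotone convergence would work just as well, since $t\mapsto\frac{1}{t}\log(1+ts)$ increases to $s$ as $t\downarrow 0$). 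There is no circularity, since Theorem \ref{t32} and Corollary \ref{c33} precede Theorem \ref{t35} and do not use it. The paper instead keeps Theorem \ref{t35} independent of the structure theorem: it only uses the isometry on the scalar multiples $\lambda\cdot\mathbf 1_{\nabla}$, writes $\|\lambda\mathbf 1\|_{\log,\nu}=t\,\mu(\mathbf 1_{\nabla})\log(1+\lambda)$ with $t=\nu(\mathbf 1_{\nabla})/\mu(\mathbf 1_{\nabla})$, and compares it for large $\lambda$ with $\|U(\lambda\mathbf 1)\|_{\log,\mu}\le\mu(\mathbf 1_{\nabla})\log\lambda+\|U(\mathbf 1)\|_{\log,\mu}$, treating $t>1$ directly and $t<1$ by applying the same estimate to $U^{-1}$. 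What each approach buys: the paper's argument is elementary and self-contained (useful since Theorem \ref{t35} is then fed into Theorem \ref{t36} to refine Theorem \ref{t32}), while yours is shorter modulo the cited machinery and, if localized to $e\in\nabla_1$ via Corollary \ref{c34}, would in fact yield the measure-preservation statement of Theorem \ref{t36} directly. One small caution: your closing $t\to\infty$ alternative needs $\int_{\nabla}\log(2h-1)\,d\nu$ to make sense (it can diverge to $-\infty$ where $h$ is close to $\tfrac12$), so it is right that you treat it only as an aside and rely on the $t\to 0^{+}$ computation.
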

\begin{proof}
Setting $h=\frac{d\nu}{d\mu}$, \ $t=\frac{\nu(\mathbf 1_{\nabla})}{\mu(\mathbf 1_{\nabla})}$, \ we have that $\int \limits_{\nabla}{h} d\mu =\nu(\mathbf 1_{\nabla})=t\cdot \mu(\mathbf 1_{\nabla})$, and
$$
\int \limits_{\nabla}\log(1+ \lambda)^{h} d\mu= \log(1+\lambda)\int
\limits_{\nabla}h d\mu=
$$
$$=\log(1+\lambda)\cdot t\cdot \mu(\mathbf 1_{\nabla})=\int
\limits_{\nabla}\log(1+ \lambda)^{t}d\mu. \eqno(4)
$$
for all $\lambda>0$.

Suppose that there exists a linear  surjective isometry $U$ from $L_{\log}(\nabla, \nu)$ onto $L_{\log}(\nabla, \mu)$.
Consider the following two cases.

$1)$. If $t > 1$, then for $ \lambda>1$  \  we have
$$
\int \limits_{\nabla}\log\frac{(1 +
\lambda)^{t}}{\lambda}d\mu>\int \limits_{\nabla}\log \frac{(1 +
\lambda)^{t}}{1+\lambda}d\mu> \|U(\mathbf 1_{\nabla})\|_{\log,\mu}.
$$
Then
$$
\int \limits_{\nabla}\log(1 + \lambda)^{t}d\mu-\int
\limits_{\nabla}\log \lambda \ d\mu>\|U(\mathbf 1_{\nabla}\|_{\log,\mu}
$$
and
$$
\int \limits_{\nabla}\log(1 + \lambda)^{t}d\mu>\int
\limits_{\nabla}\log \lambda \ d\mu+\|U(\mathbf 1_{\nabla})\|_{\log,\mu}=
$$
$$
=\int \limits_{\nabla}\log \lambda \ d\mu+\int\limits_{\nabla}\log (1+|U(\mathbf 1_{\nabla}))|d\mu>
\int\limits_{\nabla}\log \lambda \ d\mu+
$$
$$+\int\limits_{\nabla}\log(\frac{1}{\lambda}+|U(\mathbf 1_{\nabla})|)d\mu.
\eqno(5)
$$

Using $(4)$ and $(5)$, we get
$$
\|\lambda\cdot\mathbf 1_{\nabla}\|_{\log,\nu}=\int
\limits_{\nabla}\log(1+ \lambda\cdot\mathbf 1_{\nabla})d\nu=\int
\limits_{\nabla}h\cdot\log(1+ \lambda\cdot\mathbf 1_{\nabla} )d\mu=
$$
$$
=\int\limits_{\nabla}\log(1+ \lambda\cdot\mathbf 1_{\nabla} )^{h}d\mu=\int
\limits_{\nabla}\log( 1+ \lambda )^{t}d\mu
>\int \limits_{\nabla}\log\lambda \ d\mu+\int \limits_{\nabla}
\log(\frac{1}{\lambda}+ |U(\mathbf 1_{\nabla})|)d\mu=
$$
$$
+\int \limits_{\nabla}
\log(1+ |U(\lambda \cdot \mathbf 1_{\nabla})|)d\mu=\|U(\lambda \cdot
\mathbf 1_{\nabla})\|_{\log,\mu}.
$$
Consequently,
$$
\|\lambda \cdot \mathbf 1_{\nabla} 1\|_{\log,\nu}\neq\|U(\lambda \cdot
\mathbf 1_{\nabla})\|_{\log,\mu},
$$
that is, the map $U: L_{\log}(\nabla, \nu) \to L_{\log}(\nabla,
\mu)$ is not  isometry.

$2)$. If $0<t < 1$ then using the equalities $\frac{d\mu}{d\nu} =
(\frac{d\nu}{d\mu})^{-1} =h^{-1}$ we have
$$
\frac{\int \limits_{\nabla}h^{-1} d\nu}{\nu(\mathbf 1_{\nabla})}= \frac{\int
\limits_{\nabla}h\cdot h^{-1} d\mu}{\int
\limits_{\nabla}h d\mu}=\frac{\mu(\mathbf 1_{\nabla})}{\int
\limits_{\nabla}h d\mu}=\frac{1}{t}> 1.
$$

Using now the case $1)$ for the isometry $U^{-1}:
L_{\log}(\nabla, \mu) \to L_{\log}(\nabla,\nu),$ we get
$$
\|\lambda \cdot \mathbf 1_{\nabla}\|_{\log,\mu}\neq\|U^{-1}(\lambda \cdot
\mathbf 1_{\nabla})\|_{\log,\nu},
$$
that is, the map $U^{-1}$ is not  isometry.
\end{proof}
Now we can refine Theorem \ref{t32}.
\begin{teo}\label{t36}
Let $U:L_{log}(\nabla_1, \mu_1) \to L_{\log}(\nabla_2, \mu_2)$ be a  linear isometry. Then there exists an injective $\sigma$-additive homomorphism $\Phi$ \ from algebra \ $L_{0}(\nabla_1)$ \ into algebra \ $L_{0}(\nabla_2)$  \ such that \  $U(f) = U(\mathbf 1_{\nabla_1})\cdot \Phi(f)$ for each $f\in L_{\log}(\nabla_1, \mu_1)$, in addition, $\mu_2(\Phi(e)) = \mu_1(e)$ \ for all $e \in \nabla_1$.
\end{teo}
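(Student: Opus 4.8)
The representation $U(f)=U(\mathbf 1_{\nabla_1})\cdot\Phi(f)$, together with the injective $\sigma$-additive homomorphism $\Phi\colon L_0(\nabla_1)\to L_0(\nabla_2)$, is already provided by Theorem \ref{t32}, so the only new content is the equality $\mu_2(\Phi(e))=\mu_1(e)$ for all $e\in\nabla_1$; equivalently, writing $\nu(e):=\mu_2(\Phi(e))$, that $\nu=\mu_1$. The plan is to turn $U$ into a \emph{multiplication operator} over the single Boolean algebra $\nabla_1$ and then to apply Theorem \ref{t35}. Since $\Phi$ is injective and $\mu_1,\mu_2$ are strictly positive, the measures $\mu_1$ and $\nu$ are mutually absolutely continuous, so $0<\frac{d\mu_1}{d\nu}\in L_0(\nabla_1)$; put $h:=-\mathbf 1_{\nabla_1}+2\cdot\frac{d\mu_1}{d\nu}$. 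Using formula $(1)$ of Theorem \ref{t32} and the equality $\frac{d\lambda}{d\mu_2}=\Phi\big(\frac{d\mu_1}{d\nu}\big)$ established in its proof, I would first record that $|U(\mathbf 1_{\nabla_1})|=\Phi(h)$; since $\Phi$ is a positive injective homomorphism and the support of $|U(\mathbf 1_{\nabla_1})|$ equals $\Phi(\mathbf 1_{\nabla_1})$, this also yields $h>0$ $\mu_1$-a.e., hence $h^{-1}\in L_0(\nabla_1)$.

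The heart of the argument is the identity
$$\int\limits_{\nabla_1}\log(1+|f|)\,d\mu_1=\int\limits_{\nabla_1}\log(1+ h\cdot|f|)\,d\nu,\qquad f\in L_0(\nabla_1).$$
For $f\in L_{\log}(\nabla_1,\mu_1)$ this is obtained by unwinding $\|f\|_{\log,\mu_1}=\|U(f)\|_{\log,\mu_2}$: one has $|U(f)|=|U(\mathbf 1_{\nabla_1})|\cdot\Phi(|f|)=\Phi(h\cdot|f|)$, hence $\log(1+|U(f)|)=\Phi\big(\log(1+h\cdot|f|)\big)$ by \cite[Proposition 3]{ach}, and then the change-of-variables formula $\int_{\nabla_2}\Phi(g)\,d\mu_2=\int_{\nabla_1}g\,d\nu$ from the proof of Theorem \ref{t32} applies (the needed hypothesis $\Phi(g)\in L_1(\nabla_2,\mu_2)$ holds because that integral equals $\|f\|_{\log,\mu_1}<\infty$). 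The extension to an arbitrary $f\in L_0(\nabla_1)$ is routine: apply the identity to the truncations $f_n=f\cdot\chi_{\{|f|\le n\}}\in L_\infty(\nabla_1)\subset L_{\log}(\nabla_1,\mu_1)$, note $|f_n|\uparrow|f|$, and pass to the limit by monotone convergence on both sides.

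Granting this identity, the linear map $\Psi(f)=h\cdot f$ is an isometry of $L_{\log}(\nabla_1,\mu_1)$ onto $L_{\log}(\nabla_1,\nu)$: the identity shows at once that $\Psi$ is well defined and norm preserving, and that $g\mapsto h^{-1}\cdot g$ carries $L_{\log}(\nabla_1,\nu)$ back into $L_{\log}(\nabla_1,\mu_1)$, so it is the inverse of $\Psi$. Finally, fixing $0\ne e\in\nabla_1$, the algebra $e\cdot\nabla_1$ is complete, $\mu_1$ and $\nu$ restrict to strictly positive finite measures on it, and since $\Psi$ is multiplication by $h$ it restricts to a surjective linear isometry of $L_{\log}(e\cdot\nabla_1,\mu_1)$ onto $L_{\log}(e\cdot\nabla_1,\nu)$; by Theorem \ref{t35} this forces $\mu_1(e)=\nu(e)=\mu_2(\Phi(e))$, which also holds trivially for $e=0$. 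The one genuinely delicate point is that the whole reduction rests on $|U(\mathbf 1_{\nabla_1})|$ belonging to the range of $\Phi$ — exactly what formula $(1)$ guarantees; everything else is bookkeeping with the push-forward measure $\nu$ and an elementary monotone-convergence step needed to make $\Psi$ surjective.
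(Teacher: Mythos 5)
Your argument is correct, and it follows the same overall strategy as the paper — deduce $\mu_2(\Phi(e))=\mu_1(e)$ by applying Theorem \ref{t35} on each band $e\cdot\nabla_1$ with the two measures $\mu_1$ and $\nu=\mu_2\circ\Phi$ — but your implementation of the key step is genuinely different and, in fact, more complete. The paper gets its surjective isometry by invoking Corollary \ref{c34}, which only provides an \emph{into} isometry $U_e$ with values in $L_{\log}(\Phi(e)\cdot\nabla_2,\mu_2)$, a space over a different Boolean algebra; the surjectivity asserted in the paper's proof, and the identification of $\Phi(e)\cdot\Phi(\nabla_1)$-supported functions with functions on $e\cdot\nabla_1$ equipped with the push-forward measure $\nu$, are left implicit (and surjectivity is exactly what the proof of Theorem \ref{t35} uses, notably in its case $t<1$ via $U^{-1}$). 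You instead exploit formula $(1)$ of Theorem \ref{t32}, i.e. $|U(\mathbf 1_{\nabla_1})|=\Phi(h)$ with $h=-\mathbf 1_{\nabla_1}+2\,\frac{d\mu_1}{d\nu}>0$, to transfer $\|f\|_{\log,\mu_1}=\|U(f)\|_{\log,\mu_2}$ into the single-algebra identity $\int_{\nabla_1}\log(1+|f|)\,d\mu_1=\int_{\nabla_1}\log(1+h|f|)\,d\nu$, extend it to all of $L_0(\nabla_1)$ by truncation and monotone convergence, and thereby obtain the explicit multiplication operator $\Psi(f)=h\cdot f$ as a \emph{surjective} isometry of $L_{\log}(e\cdot\nabla_1,\mu_1)$ onto $L_{\log}(e\cdot\nabla_1,\nu)$ for every $0\neq e\in\nabla_1$, after which Theorem \ref{t35} applies verbatim. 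What the paper's route buys is brevity; what yours buys is that the two points the paper glosses over — surjectivity and the passage to a common Boolean algebra — are actually established, with the only extra ingredients being the change-of-variables formula $\int_{\nabla_2}\Phi(g)\,d\mu_2=\int_{\nabla_1}g\,d\nu$ and the identity $\Phi(\log(1+g))=\log(1+\Phi(g))$, both already used in the proof of Theorem \ref{t32}.
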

\begin{proof}
Let there exists $0\neq e \in \nabla_1$ such that $\mu_2(\Phi(e)) \neq \mu_1(e)$, that is,  $$\mu_1(e\cdot \mathbf 1_{\nabla_{1}})\neq \mu_2(\Phi(e)) \cdot \mathbf 1_{\nabla_{2}}).$$
By corollary \ref{c34}, \ the map \ $U_e$ \ is a linear surjective isometry from $L_{log}(e\cdot\nabla_1, \mu_1)$ \ onto \ $L_{\log}(\Phi(e)\cdot \Phi(\nabla_2), \mu_2)$, which is impossible by Theorem \ref{t35}.
\end{proof}

Using Theorems \ref{t22}, \ref{t36} and Corollary \ref{c33}, we have the following  criterion for isometric of the $F$-spaces $L_{log}(\nabla_1, \mu_1)$ and  $L_{\log}(\nabla_2, \mu_2)$.

\begin{teo}\label{t47}
The $F$-spaces $L_{log}(\nabla_1, \mu_1)$ and  $L_{\log}(\nabla_2, \mu_2)$ are isometric if and only if there exists a preserves  measure isomorphism $\varphi:(\nabla_1, \mu_1) \to (\nabla_2, \mu_2)$.
\end{teo}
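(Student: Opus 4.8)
The plan is to obtain both implications simply by assembling the results already proved, since \ref{t47} is essentially a packaging of Theorems \ref{t22} and \ref{t36} together with Corollary \ref{c33}. For the ``if'' direction, suppose $\varphi:(\nabla_1,\mu_1)\to(\nabla_2,\mu_2)$ is a measure-preserving Boolean isomorphism. Then Theorem \ref{t22} immediately furnishes the algebraic isomorphism $\Phi:L_{\log}(\nabla_1,\mu_1)\to L_{\log}(\nabla_2,\mu_2)$ and asserts that it is a surjective (positive) linear isometry; hence the two $F$-spaces are isometric, and nothing further is required.

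For the ``only if'' direction I would start from a surjective linear isometry $U:L_{\log}(\nabla_1,\mu_1)\to L_{\log}(\nabla_2,\mu_2)$. By Theorem \ref{t36} there is an injective $\sigma$-additive homomorphism $\Phi:L_{0}(\nabla_1)\to L_{0}(\nabla_2)$ with $U(f)=U(\mathbf 1_{\nabla_1})\cdot\Phi(f)$ for all $f\in L_{\log}(\nabla_1,\mu_1)$, and, crucially, $\mu_2(\Phi(e))=\mu_1(e)$ for every $e\in\nabla_1$. Because $U$ is surjective, Corollary \ref{c33} tells us that this same $\Phi$ is in fact an isomorphism of $L_{0}(\nabla_1)$ onto $L_{0}(\nabla_2)$. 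I would then set $\varphi=\Phi|_{\nabla_1}$: an algebra isomorphism carries the lattice of idempotents of $L_{0}(\nabla_1)$ bijectively onto that of $L_{0}(\nabla_2)$, so $\varphi$ is a Boolean isomorphism from $\nabla_1$ onto $\nabla_2$, and the identity $\mu_2(\varphi(e))=\mu_2(\Phi(e))=\mu_1(e)$ inherited from Theorem \ref{t36} says precisely that $\varphi$ preserves the measure.

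The one point deserving a sentence of care is the compatibility of the two uses of ``$\Phi$'': the homomorphism produced by Corollary \ref{c33} and the one produced by Theorem \ref{t36} are both the unique $\sigma$-additive extension to $L_0$ of the Boolean homomorphism $e\mapsto s(U(e))$ (cf. Theorem 2.3 of \cite{chl}), so there is no ambiguity and the measure-preservation identity really does apply to the $\varphi$ that is onto $\nabla_2$. I do not expect a genuine obstacle in this proof — the only mild subtlety is the surjectivity of $\varphi$, which comes for free once $\Phi$ is known to be an algebra isomorphism of the ambient $L_0$-spaces rather than merely an injection; everything else is bookkeeping over Theorems \ref{t22}, \ref{t36} and Corollary \ref{c33}.
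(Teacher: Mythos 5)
Your proposal is correct and matches the paper's (very terse) argument: the paper derives Theorem \ref{t47} exactly by combining Theorem \ref{t22} for the ``if'' direction with Theorem \ref{t36} and Corollary \ref{c33} for the ``only if'' direction, and your restriction $\varphi=\Phi|_{\nabla_1}$ together with the observation that both $\Phi$'s are the unique extension of $e\mapsto s(U(e))$ is precisely the bookkeeping the paper leaves implicit.
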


Now we give criterion of isometric for the $F$-spaces $L_{log}(\nabla_1, \mu_1)$ and  $L_{\log}(\nabla_2, \mu_2)$ using the  passports of the Boolean algebras $\nabla_1$ and $\nabla_2$.

Let $ \nabla$ be a non-atomic complete Boolean algebra and let $\mu$ be  a strictly positive finite measure on $ \nabla$.  By $\tau(e\cdot \nabla)$ denote the minimal cardinality of a set that is dense in the Boolean algebra $ e\cdot \nabla$ with respect to the order topology ($(o)$-topology). A  non-atomic complete Boolean algebra $ \nabla$ is said to be homogeneous if $\tau(e\cdot \nabla)=\tau(g\cdot \nabla)$ for any nonzero $ e, g \in \nabla$. The cardinality $\tau(\nabla)$ is called the weight of a homogeneous Boolean algebra $\nabla$ (see, for example \cite[Ch. VII]{V}).

Since $\mu(\mathbf 1_{\nabla})<\infty$ it follows that  $\nabla$ is  a  direct product of homogeneous Boolean algebras $e_n\cdot \nabla$, \ where \ $e_n\cdot e_m = 0, \ n\neq m, \ \tau_n=\tau (e_n\cdot\nabla)< \tau_{n+1}, \ n,m =1,2,\dots$ \ \cite[Ch. VII, \S \ 2]{V}).

Set $\alpha_n = \mu(e_n)$. The matrix
$\left(
                  \begin{array}{ccc}
                    \tau_{1} & \tau_{2} & \dots \\
                    \alpha_1 & \alpha_2 & \dots \\
                  \end{array}
                \right)$
is called  {\it the  passport of  Boolean algebra $(\nabla, \mu)$}.

The following theorem gives a classification of Boolean algebras with finite measures \cite[Ch. VII, \S \ 2]{V}.
\begin{teo}\label{t48}  Let $\mu_i$ be  a strictly positive finite measure  on the  non-atomic complete Boolean algebra  $\nabla_i$, \ and let $\left(
\begin{array}{ccc}
                   \tau_{1}^{(i)} & \tau_{2}^{(i)} & \dots  \\
                    \alpha_1^{(i)} &\alpha_2^{(i)} & \dots \\
                  \end{array}
                \right)$ be the passport of  $(\nabla_i, \mu_i)$, \ $i=1, 2$.
Then the following conditions are equivalent:

$(i)$. There exists a preserves  measure isomorphism  $\varphi:(\nabla_1, \mu_1) \to (\nabla_2, \mu_2)$;

$(ii)$.  $\tau_{n}^{(1)} =\tau_{n}^{(2)}$ and  $\alpha_n^{(1)}  =\alpha_n^{(2)} $  for all $n=1,2\dots$
\end{teo}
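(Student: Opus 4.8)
The plan is to prove the two implications separately, the direction $(i)\Rightarrow(ii)$ being elementary and $(ii)\Rightarrow(i)$ resting on Maharam's classification of homogeneous measure algebras.

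For $(i)\Rightarrow(ii)$ I would start from a measure-preserving isomorphism $\varphi:(\nabla_1,\mu_1)\to(\nabla_2,\mu_2)$ and first observe that the weight function $e\mapsto\tau(e\cdot\nabla)$ is a Boolean invariant: the restriction of $\varphi$ to $e\cdot\nabla_1$ is a Boolean isomorphism onto $\varphi(e)\cdot\nabla_2$, any Boolean isomorphism is a homeomorphism for the respective $(o)$-topologies, and therefore it carries a dense set of minimal cardinality to one of the same cardinality; hence $\tau(e\cdot\nabla_1)=\tau(\varphi(e)\cdot\nabla_2)$ for every $e\in\nabla_1$. Next I would note that the decomposition $\nabla_i=\prod_n e_n^{(i)}\cdot\nabla_i$ is canonical: for each cardinal $\sigma$ the element $p_\sigma^{(i)}:=\sup\{e\in\nabla_i: e\cdot\nabla_i \text{ is homogeneous of weight }\sigma\}$ is either $0$ or one of the $e_n^{(i)}$, and because $\varphi$ preserves the weight function it must map $p_\sigma^{(1)}$ to $p_\sigma^{(2)}$. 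Consequently the set of weights occurring in the two passports coincides, and since by convention $\tau_n^{(i)}<\tau_{n+1}^{(i)}$ this forces $\tau_n^{(1)}=\tau_n^{(2)}$ for all $n$; then $\alpha_n^{(1)}=\mu_1(e_n^{(1)})=\mu_2(\varphi(e_n^{(1)}))=\mu_2(e_n^{(2)})=\alpha_n^{(2)}$, which is $(ii)$.

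For $(ii)\Rightarrow(i)$ I would use that equality of the passports means that for each $n$ the components $e_n^{(1)}\cdot\nabla_1$ and $e_n^{(2)}\cdot\nabla_2$ are homogeneous non-atomic complete Boolean algebras of the same weight $\tau_n$ carrying strictly positive finite measures of the same total mass $\alpha_n$. Here one invokes Maharam's theorem: such an algebra is, up to a measure-preserving Boolean isomorphism, uniquely determined by $(\tau_n,\alpha_n)$ — concretely it is isomorphic to the measure algebra of $\{0,1\}^{\tau_n}$ equipped with $\alpha_n$ times the usual product probability measure. The proof of this fact constructs, by transfinite induction along $\tau_n$, a dense family $\{p_\xi\}_{\xi<\tau_n}$ of stochastically independent elements each of relative measure $\tfrac12$, homogeneity being exactly what rules out a local obstruction to continuing the induction, and then checks that sending these to the coordinate sets of $\{0,1\}^{\tau_n}$ extends to the desired isomorphism. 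Applying this to each $n$ yields measure-preserving isomorphisms $\varphi_n:e_n^{(1)}\cdot\nabla_1\to e_n^{(2)}\cdot\nabla_2$; since $\nabla_i$ is the direct product $\prod_n e_n^{(i)}\cdot\nabla_i$ with $\sum_n\alpha_n^{(i)}=\mu_i(\mathbf 1_{\nabla_i})<\infty$, the product map $\varphi=\prod_n\varphi_n$ is a well-defined Boolean isomorphism $\nabla_1\to\nabla_2$, measure preserving because it is so on each factor and the measures are countably additive.

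The main obstacle is the substantive ingredient of $(ii)\Rightarrow(i)$, namely the uniqueness up to measure-preserving isomorphism of a homogeneous non-atomic measure algebra of prescribed weight and total mass — that is, Maharam's theorem — whose proof requires the transfinite exhaustion argument producing a dense independent family of elements of relative measure one half. Everything else (invariance of $\tau$, canonicity and countability of the homogeneous decomposition, and assembling the product isomorphism) is routine. Since Theorem \ref{t48} is quoted from \cite[Ch. VII, \S 2]{V}, in the paper one would simply refer to that source; the sketch above only indicates the structure of that classical argument.
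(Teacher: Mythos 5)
Your outline is correct: the direction $(i)\Rightarrow(ii)$ follows, as you say, from the invariance of the weight function under Boolean isomorphisms and the canonicity of the homogeneous decomposition, while $(ii)\Rightarrow(i)$ is exactly Maharam's classification of homogeneous measure algebras (after normalizing each component by $\alpha_n$ so that the scaled isomorphisms preserve the original measures). The paper itself gives no proof of this theorem --- it is quoted verbatim from \cite[Ch. VII, \S 2]{V} --- and your sketch faithfully reproduces the standard argument found in that source, so there is nothing to compare beyond noting that the paper simply cites it.
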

Using Theorems \ref{t47} and \ref{t48} we obtain the following  criterion for isometric of the $F$-spaces $L_{log}(\nabla_1, \mu_1)$ and  $L_{\log}(\nabla_2, \mu_2)$.
\begin{cor}\label{c37}  Let $(\nabla_i, \mu_i)$ be   the same as in Theorem \ref{t48}. Then  the $F$-spaces $L_{log}(\nabla_1, \mu_1)$ and  $L_{\log}(\nabla_2, \mu_2)$ are isometric if and only if \ $\tau_{n}^{(1)} =\tau_{n}^{(2)}$ and   $\alpha_n^{(1)}  =\alpha_n^{(2)} $  for all $n=1,2\dots$, where $\left(
\begin{array}{ccc}
                   \tau_{1}^{(i)} & \tau_{2}^{(i)} & \dots  \\
                    \alpha_1^{(i)} &\alpha_2^{(i)} & \dots \\
                  \end{array}
                \right)$ is the passport of Boolean algebra  $(\nabla_i, \mu_i)$.
\end{cor}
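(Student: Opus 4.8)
This statement is an immediate consequence of the two preceding theorems, so the plan is simply to chain their equivalences.

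First, since each $(\nabla_i,\mu_i)$ is a complete (in fact non-atomic) Boolean algebra equipped with a strictly positive finite measure, Theorem \ref{t47} applies and gives: the $F$-spaces $L_{\log}(\nabla_1,\mu_1)$ and $L_{\log}(\nabla_2,\mu_2)$ are isometric if and only if there exists a measure-preserving Boolean isomorphism $\varphi\colon(\nabla_1,\mu_1)\to(\nabla_2,\mu_2)$. Internally this rests on Theorem \ref{t22} for one implication and on Theorem \ref{t36} together with Corollary \ref{c33} for the other, but here we may cite Theorem \ref{t47} as a black box.

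Second, the classification Theorem \ref{t48} asserts precisely that, for non-atomic complete Boolean algebras carrying strictly positive finite measures, the existence of a measure-preserving isomorphism $\varphi\colon(\nabla_1,\mu_1)\to(\nabla_2,\mu_2)$ (its condition $(i)$) is equivalent to the coincidence of the passports, i.e.\ $\tau_n^{(1)}=\tau_n^{(2)}$ and $\alpha_n^{(1)}=\alpha_n^{(2)}$ for all $n=1,2,\dots$ (its condition $(ii)$). At this point one tacitly uses that, because $\mu_i(\mathbf 1_{\nabla_i})<\infty$, each $\nabla_i$ decomposes as a direct product of homogeneous components $e_n\cdot\nabla_i$ with strictly increasing weights $\tau_n$, so that the passport matrix is well defined.

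Chaining the two equivalences then yields the corollary. There is no genuine obstacle: the only point to verify is that the hypotheses of Theorems \ref{t47} and \ref{t48} are simultaneously in force, which is guaranteed by the standing assumption that $(\nabla_i,\mu_i)$ are as in Theorem \ref{t48}.
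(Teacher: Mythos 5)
Your proposal is correct and coincides with the paper's own argument: the corollary is obtained there exactly by chaining Theorem \ref{t47} (isometry of the $F$-spaces $\Longleftrightarrow$ existence of a measure-preserving isomorphism) with the classification Theorem \ref{t48} (measure-preserving isomorphism $\Longleftrightarrow$ equality of the passports). Nothing further is needed.
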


\begin{cor}\label{c38} (cf. Theorem \ref{t35}). Let $\nabla$ be a homogeneous Boolean algebra and let  $\mu, \ \nu$ \ be  a strictly positive finite measures  on   $\nabla$. Then  the $F$-spaces $L_{log}(\nabla, \mu)$ \ and \ $L_{\log}(\nabla, \nu)$ \ are isometric if and only if \ $\mu(\mathbf 1_{\nabla}) =\nu(\mathbf 1_{\nabla})$.
\end{cor}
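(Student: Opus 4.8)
The plan is to read this off from the classification already established, namely Corollary \ref{c37}, once one records what the passport of a homogeneous Boolean algebra looks like. Note first that a homogeneous Boolean algebra is by definition non-atomic and complete, so the hypotheses of Corollary \ref{c37} (and of Theorems \ref{t48}, \ref{t47}) are met with $\nabla_1=\nabla_2=\nabla$, $\mu_1=\mu$, $\mu_2=\nu$.

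The "only if" implication I would deduce directly from Theorem \ref{t35}, which requires no homogeneity: if $\mu(\mathbf 1_{\nabla})\neq\nu(\mathbf 1_{\nabla})$ then $L_{\log}(\nabla,\mu)$ and $L_{\log}(\nabla,\nu)$ are not isometric, so isometry of the two $F$-spaces forces $\mu(\mathbf 1_{\nabla})=\nu(\mathbf 1_{\nabla})$.

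For the converse, I would use homogeneity of $\nabla$ to observe that in the decomposition of $\nabla$ into a product of homogeneous components (the paragraph preceding Theorem \ref{t48}) there is only one block. Hence the passport of $(\nabla,\mu)$ is the one-column matrix with top entry $\tau(\nabla)$ and bottom entry $\mu(\mathbf 1_{\nabla})$, and the passport of $(\nabla,\nu)$ is the one-column matrix with top entry $\tau(\nabla)$ and bottom entry $\nu(\mathbf 1_{\nabla})$; the top entries agree because the weight $\tau(\nabla)$ is an invariant of the Boolean algebra $\nabla$ alone. Assuming $\mu(\mathbf 1_{\nabla})=\nu(\mathbf 1_{\nabla})$, the two passports coincide, so Corollary \ref{c37} (equivalently, Theorem \ref{t48} combined with Theorems \ref{t22} and \ref{t47}) gives that $L_{\log}(\nabla,\mu)$ and $L_{\log}(\nabla,\nu)$ are isometric.

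I do not expect a real obstacle: the statement is a direct specialization of Corollary \ref{c37}. The only point demanding a little care is the elementary bookkeeping that homogeneity collapses the passport to a single column with prescribed weight $\tau(\nabla)$, so that the numerical matching condition $\alpha_1^{(1)}=\alpha_1^{(2)}$ of Corollary \ref{c37} becomes exactly $\mu(\mathbf 1_{\nabla})=\nu(\mathbf 1_{\nabla})$.
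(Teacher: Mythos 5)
Your proof is correct and matches the paper's intended derivation: the paper states this corollary without a separate proof, precisely because it follows from Theorem \ref{t35} (the ``only if'' direction) together with Corollary \ref{c37}/Theorem \ref{t48} after noting that homogeneity reduces the passport to the single column $\bigl(\tau(\nabla);\ \mu(\mathbf 1_{\nabla})\bigr)$. Your bookkeeping of the passport and the appeal to the weight $\tau(\nabla)$ as an invariant of $\nabla$ alone is exactly the point, so nothing is missing.
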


\end{document}